\documentclass[11pt]{amsart}
\usepackage{amsmath, amsthm, amssymb, amsfonts, mathtools}
\usepackage[T1]{fontenc}
\newif\ifscalablefonts\scalablefontsfalse
\IfFileExists{libertinus.sty}{%
  \usepackage{libertinus}\scalablefontstrue
  \IfFileExists{newtxmath.sty}{\usepackage[libertine,vvarbb]{newtxmath}}{}%
}{%
  \IfFileExists{libertine.sty}{%
    \usepackage{libertine}\scalablefontstrue
    \IfFileExists{newtxmath.sty}{\usepackage[libertine,vvarbb]{newtxmath}}{}%
  }{\IfFileExists{lmodern.sty}{\usepackage{lmodern}\scalablefontstrue}{}}%
}
\IfFileExists{microtype.sty}{%
  \ifscalablefonts\usepackage[protrusion=true,expansion=true,final]{microtype}%
  \else\usepackage[protrusion=true,expansion=false,final]{microtype}\fi}{}
\usepackage[svgnames]{xcolor}
\usepackage[margin=1.35in]{geometry}
\usepackage{booktabs}
\usepackage[hang,flushmargin]{footmisc}
\usepackage{enumitem}
\setlist{nosep,leftmargin=2em,labelsep=0.6em}
\setlist[itemize]{label=\textendash}
\usepackage{tikz-cd}
\usepackage{tikz}
\usetikzlibrary{arrows.meta,calc,decorations.pathreplacing}
\usepackage{pgfplots}\pgfplotsset{compat=1.17}
\usepackage{caption}
\usepackage{thmtools}
\definecolor{dark-red}{rgb}{0.4,0.15,0.15}
\definecolor{cite-red}{rgb}{0.55,0.22,0.18}
\usepackage[pagebackref]{hyperref}
\usepackage{hyperxmp}
\hypersetup{colorlinks, linkcolor=dark-red, citecolor=cite-red, urlcolor=cite-red,
  pdftitle={Pontryagin-Thom, the connected components of the motivic sphere spectrum, and principal quadratic divisors},
  pdfauthor={Kyle Ormsby},
  pdfkeywords={motivic homotopy theory, framed correspondences, Grothendieck-Witt ring, Burnside ring, Pontryagin-Thom construction, reciprocity law},
  pdfsubject={The connected components of the motivic sphere spectrum via a Segal-style Pontryagin-Thom argument},
  pdfcontactemail={kyle@e-infinity.space},
  pdfcontactcity={Portland}, pdfcontactcountry={USA}}
\usepackage[capitalize,noabbrev]{cleveref}
\crefname{inputt}{Input}{Inputs}
\crefformat{section}{\S#2#1#3}
\Crefformat{section}{Section~#2#1#3}
\crefrangeformat{section}{\S\S#3#1#4--#5#2#6}
\crefmultiformat{section}{\S\S#2#1#3}{ and #2#1#3}{, #2#1#3}{ and #2#1#3}
\crefformat{subsection}{\S#2#1#3}
\Crefformat{subsection}{\S#2#1#3}
\crefrangeformat{subsection}{\S\S#3#1#4--#5#2#6}
\crefmultiformat{subsection}{\S\S#2#1#3}{ and #2#1#3}{, #2#1#3}{ and #2#1#3}
\Crefname{inputt}{Input}{Inputs}
\crefname{equation}{}{}
\creflabelformat{equation}{\textup{(#2#1#3)}}
\renewcommand*{\backref}[1]{}
\renewcommand*{\backrefalt}[4]{%
  \ifcase #1\relax\or{\footnotesize\textcolor{cite-red!80!black}{(Cited on p.~#2.)}}%
  \else{\footnotesize\textcolor{cite-red!80!black}{(Cited on pp.~#2.)}}\fi}

\newcommand{\Z}{\mathbb{Z}}
\newcommand{\Q}{\mathbb{Q}}
\newcommand{\R}{\mathbb{R}}
\newcommand{\C}{\mathbb{C}}
\newcommand{\A}{\mathbb{A}}
\newcommand{\PP}{\mathbb{P}}
\newcommand{\sph}{\mathbb{S}}
\newcommand{\GW}{\mathrm{GW}}
\newcommand{\W}{\mathrm{W}}
\newcommand{\form}[1]{\langle #1\rangle}
\newcommand{\rk}{\operatorname{rank}}
\newcommand{\tr}{\operatorname{tr}}
\newcommand{\Fr}{\mathrm{Fr}}
\newcommand{\ZF}{\mathrm{ZF}}
\newcommand{\pfr}{\pi_0^{\mathrm{fr}}}
\newcommand{\Hilbfr}{\mathrm{Hilb}^{\mathrm{fr}}}
\newcommand{\Au}{A^{u}}

\DeclareMathOperator{\Hom}{Hom}
\newcommand{\PT}{\mathrm{PT}}
\newcommand{\Bez}{\mathrm{B\acute{e}z}}
\newcommand{\res}{\operatorname{res}}
\newcommand{\disc}{\operatorname{disc}}
\DeclareMathOperator{\coker}{coker}

\declaretheoremstyle[
  headfont=\bfseries, notefont=\mdseries, notebraces={(}{)},
  bodyfont=\itshape, headpunct={.}, postheadspace=0.6em]{plainstyle}
\declaretheoremstyle[
  headfont=\bfseries, notefont=\mdseries, notebraces={(}{)},
  bodyfont=\normalfont, headpunct={.}, postheadspace=0.6em]{defstyle}
\declaretheorem[style=plainstyle,numberwithin=section]{theorem}
\declaretheorem[style=plainstyle,sibling=theorem]{proposition}
\declaretheorem[style=plainstyle,sibling=theorem]{lemma}

\declaretheorem[style=defstyle,sibling=theorem]{definition}
\declaretheorem[style=defstyle,sibling=theorem]{example}
\declaretheorem[style=defstyle,sibling=theorem]{remark}
\declaretheorem[style=plainstyle,sibling=theorem,name=Input]{inputt}

\begin{document}

\title[Pontryagin--Thom, $\pi_0$ of the motivic sphere, and principal quadratic
divisors]{Pontryagin--Thom, \\$\pi_0$ of the motivic
sphere spectrum, \\and principal quadratic divisors}

\author{Kyle Ormsby}
\address{Department of Mathematics \& Statistics, Reed College, Portland, OR 97202, USA}
\email{kyle@e-infinity.space}
\dedicatory{Dedicated to Igor Kriz on the occasion of his 60th birthday.}
\date{\today}

\begin{abstract}
Segal computed $\pi_0$ of the equivariant sphere spectrum as the Burnside
ring by a Pontryagin--Thom construction, transversality, and marks. We carry
out the same argument in the motivic setting over a field $k$: the Burnside ring is replaced by the
Grothendieck ring $\Au(\Gamma_k)$ of finite \'etale $k$-schemes decorated
with a unit, the marks by the Dress map $d(L,u)=\tr_{L/k}\form u$ into
$\GW(k)$, and a Pontryagin--Thom map $\Au(\Gamma_k)\to\pi_0(\sph_k)(k)$ is
shown to be surjective with kernel $\ker(d)$. This gives
$\pi_0(\sph_k)(k)\cong\GW(k)$ for infinite perfect fields of characteristic
$\ne2$ without Morel's computation. We then identify $\ker(d)$, by a
reciprocity law on $\PP^1$, as the group of principal quadratic divisors of
rational functions, so that $\pi_0$ of the motivic sphere is the
unit-decorated Burnside ring of $\Gamma_k$ modulo principal quadratic
divisors. The transversality step is proved by an elementary
``re-topping'' argument.
\end{abstract}

\maketitle

\begin{flushright}\small\itshape
Omnia sponte fluant, absit violentia rebus.\\[1pt]
\upshape --- J.\,A.\ Comenius, \emph{Opera didactica omnia} (1657)
\end{flushright}
\vspace{0.5em}

\setcounter{tocdepth}{1}
{\small\tableofcontents}

\section{Introduction}\label{sec:intro}

\subsection{Segal's theorem}
Let $G$ be a finite group and $A(G)$ its Burnside ring. Segal's theorem
\cite{Segal71,tomDieck} says that the Pontryagin--Thom construction
$A(G)\to\pi_0^G(\sph)$ is a ring isomorphism, and its proof has five steps:
(S1) embed a finite $G$-set $S$ in a representation $V$ and collapse the
complement of a tubular neighborhood, $S^V\to\bigvee_SS^V\to S^V$; (S2) any
two embeddings are isotopic after stabilizing, so the class is well
defined; (S3) disjoint union and product go to sum and product; (S4) by
equivariant transversality every $f\colon S^V\to S^V$ is homotopic to the
collapse of the finite $G$-set $f^{-1}(0)$; (S5) the fixed-point degrees
$\pi_0^G(\sph)\to\prod_{(H)}\Z$ compose with $\PT$ to the mark homomorphism
$[S]\mapsto(|S^H|)$, which is injective by Burnside's theorem. The shape is:
\emph{build the map out of a combinatorial ring, prove surjectivity by a
transversality theorem, prove injectivity by a computable invariant already
known to detect the combinatorial ring.} This note carries out that shape
in the stable motivic homotopy category over a field.

\subsection{The source}\label{sec:shape}
What should play the role of $A(G)$ is dictated by the mechanism. The
Pontryagin--Thom map is the comparison from the free semiadditive category
on a point --- spans $X\leftarrow Z\to Y$ with $Z\to X$ finite, which in
the discrete setting is the universality of finite spans \cite{Harpaz} and
in the motivic one the recognition theorem for framed correspondences
\cite{EHKSY} --- to the unit of the target. A wrong-way map along $Z\to X$
exists in the sphere only when $Z\to X$ is \emph{framed}, \emph{i.e.}\ carries a
trivialization of its cotangent complex \cite{EHKSYtransfers}, so the
backward leg must carry that datum, and the decoration of an orbit is
$\pi_0$ of the torsor of its framings. Topologically $\pi_0GL_n(\R)=\Z/2$,
and a point with the non-canonical framing is already a $\Z$-combination
of canonically framed orbits (\cref{sec:tomdieck}), which is why Segal's
source is free on orbits. Motivically $\pi_0^{\A^1}GL_n=\mathbb G_m$ by the
determinant ($SL_n$ being $\A^1$-connected \cite{Morel12}; this heuristic is
the only place Morel's results enter, and the proofs use only that
$SL_n=E_n$ over commutative semilocal rings), and $\form u\ne\pm\form1$, so
orbits must be decorated by units.

\begin{definition}\label{def:Au}
The \emph{unit-decorated Burnside ring} $\Au(\Gamma_k)$ is the Grothendieck
ring of pairs $(X,u)$ with $X$ a finite \'etale $k$-scheme and
$u\in\mathcal O(X)^\times$, under disjoint union and $(X,u)\otimes(Y,v)=
(X\times_kY,\mathrm{pr}^*u\cdot\mathrm{pr}^*v)$. As a group it is free on
the classes $(L,u)$, $L/k$ finite separable and $u\in L^\times$, up to
$k$-isomorphism; it is Dress's monomial Burnside ring
$\Omega(\Gamma_k,(k^{\mathrm{sep}})^\times)$ \cite{Dress71}, and $u=1$
embeds the ordinary Burnside ring $A(\Gamma_k)$. No relation among units is
imposed: $\form u=\form{us^2}$ is part of the answer, not of the input.
The \emph{Dress map} is the ring homomorphism
\[
d\colon\Au(\Gamma_k)\to\GW(k),\qquad d(L,u)=\tr_{L/k}\form u,
\]
the transfer of the rank-one form $\form u$ along the trace; restricted to
$u=1$ it is the map through which $\GW$ is a Green functor over the
Burnside ring \cite{Dress75}, whence the name.
\end{definition}

\subsection{Results}\label{sec:mainstatement}

\begin{restatable}{theorem}{mainthm}\label{thm:main}
Let $k$ be an infinite perfect field of characteristic $\ne2$. The
Pontryagin--Thom map
\[
\PT\colon\Au(\Gamma_k)\longrightarrow\pi_0(\sph_k)(k),
\]
sending $(L,u)$ to the class of a framed point with support
$\operatorname{Spec}L$ and framing determinant $u$ (\cref{sec:pt}), is a
surjective ring homomorphism with kernel $\ker(d)$. Hence
$\pi_0(\sph_k)(k)\cong\Au(\Gamma_k)/\ker(d)\cong\GW(k)$, the last
isomorphism being the Scheja--Storch degree $w$ of \cref{sec:w}.
\end{restatable}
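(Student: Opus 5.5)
The proof follows Segal's five-step shape, with framed correspondences in place of equivariant embeddings. I will work throughout in the model $\pi_0(\sph_k)(k)\cong\pfr(k)$ given by the recognition theorem \cite{EHKSY}. There $\pi_0$ is the group of framed correspondences $\operatorname{Spec}k\leftarrow Z\to\operatorname{Spec}k$ (with $Z\subset\A^n$ finite and an \'etale neighborhood with framing functions $\phi_1,\dots,\phi_n$) modulo $\A^1$-homotopy and stabilization.

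\emph{Well-definedness and multiplicativity} (S1--S3). A finite separable $L=k(\alpha)$ embeds in $\A^1$ as the zero locus of the minimal polynomial $f$. Framing by $u\cdot f$, with $u$ lifted to a polynomial unit near $Z$, gives a framed point whose determinant along $Z$ is $u\cdot f'(\alpha)$. I will normalize by dividing by $f'(\alpha)$, or equivalently decree that the decoration is the Jacobian determinant in Scheja--Storch normalization. Independence of the embedding follows by stabilizing to $\A^n$ and using that any two closed embeddings of a finite scheme become linearly homotopic. Independence of the framing with fixed determinant follows because the framings with fixed $\det$ form an $SL_n(\mathcal O_{Z})$-torsor. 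Since $\mathcal O_Z$ is semilocal, $SL_n=E_n$, and elementary matrices are $\A^1$-homotopic to the identity. Additivity is disjoint union, and products are external products of framings, so $\PT$ is a ring homomorphism.

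\emph{Surjectivity} (S4). An arbitrary level-$n$ framed correspondence has support $Z$ finite over $k$, but possibly non-reduced or non-\'etale. The transversality/``re-topping'' step moves it by an $\A^1$-homotopy to one whose support is \'etale. I will first use that $k$ is infinite to perturb the framing functions: replace $\phi$ by $\phi-t\cdot c$ for a general constant vector $c\in k^n$ and $t\in\A^1$. For general $c$ the fiber over $t=1$ is \'etale, since $k$ perfect makes generic fibers of a finite map reduced and hence \'etale. The delicate point is keeping the support finite along the whole homotopy, because the étale neighborhood may let zeros escape. This is where re-topping comes in: I would replace the top function by a polynomial approximation agreeing to high order along $Z$, so that the homotopy has finite total support over $\A^1$. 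An étale reduced support $\coprod\operatorname{Spec}L_i$ with framing determinants $u_i$ is then by definition $\sum\PT(L_i,u_i)$. I expect this step to be the main obstacle.

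\emph{Kernel} (S5). Define the invariant $w\colon\pi_0(\sph_k)(k)\to\GW(k)$ as the Scheja--Storch/Eisenbud--Khimshiashvili--Levine degree of the framing at $Z$, taken from \cref{sec:w}. I need $w$ to be $\A^1$-homotopy invariant, which is the standard constancy of the Scheja--Storch form in flat families over $\A^1$ (char $\ne2$). For an étale point $\operatorname{Spec}L$ with Jacobian $u$, the Scheja--Storch form is $\tr_{L/k}\form{u}$, so $w\circ\PT=d$. This gives $\ker\PT\subseteq\ker d$. For the reverse inclusion, $\ker d$ is generated by the relations of $\GW$: the relations $\form{u}=\form{us^2}$, the Witt relation $\form a+\form b=\form{a+b}+\form{ab(a+b)}$, and $\form a+\form{-a}=\form1+\form{-1}$, together with the trace relations $\tr_{L/k}$ expressing $d(L,u)$ via a diagonalization. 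Each such relation I will realize as the two ends of an explicit framed homotopy on $\A^1$, of the form $t\mapsto$ a quadratic polynomial whose roots degenerate. Equivalently, I can show that a rational function on $\PP^1$ gives a homotopy between its quadratic divisor at $0$ and at $\infty$. It is then enough to know that these generate $\ker d$. That follows since $d$ is surjective ($\form a=d(k,a)$) and $\GW(k)$ has the presentation by generators $\form a$ and the above relations, with each $(L,u)$ congruent mod these relations to $\sum\form{a_i}$ via a diagonalization of $\tr_{L/k}\form u$, itself realized by a homotopy. Therefore $\ker\PT=\ker d$, $\PT$ descends to $\Au(\Gamma_k)/\ker d\cong\pi_0(\sph_k)(k)$, and $w$ identifies this quotient with $\GW(k)$.
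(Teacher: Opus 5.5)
Your outline follows the paper's architecture: recognition, rigidity via $SL_n=E_n$, the Scheja--Storch degree with $w\circ\PT=d$, transversality for surjectivity, then $\ker d\subseteq\ker\PT$. But the two load-bearing steps are not supplied, and the transversality sketch would fail as written.

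\emph{Transversality.} Making the framing polynomial and close to $\phi$ along $Z$ does not keep $V(\phi-tc)$ finite over the $t$-line. Finiteness is decided at infinity, by whether the top-degree forms have a common zero there. For instance, $\phi=(x,\,y+xy)$ cuts out the origin, yet the zero of $\phi-tc$ is $(tc_1,\,tc_2/(1+tc_1))$, which escapes at $t=-1/c_1$. The missing idea has two parts. First, adding elements of $I^2$ to the $\phi_j$ is free: Nakayama keeps $Z$ as the zero set near $Z$, and \cref{thm:rigid} keeps the class. Second, $I^2$ contains elements $x_j^M-g_j$ with $\deg g_j<M$. After this re-topping each $\psi_j$ has top form $x_j^M$, independent of $t$, so $V(\psi-t\gamma)$ never reaches infinity and is finite flat over $\A^1$. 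Two further points are absent from your sketch. A global polynomial $\psi$ has $M^n$ zeros, so $V(\psi)=Z\sqcup Y$ and the homotopy starts at $Z\sqcup Y$, not at $Z$. You must make $Y$ \'etale (the generic low-degree nudge of \cref{lem:retop}(iv)) in order to subtract it; shrinking to a neighborhood of $Z$ that avoids $Y$ destroys finiteness again. Also, ``$k$ perfect makes generic fibers reduced'' is false in characteristic $p$: $(x_1^p,\dots,x_n^p)$ is finite flat with no \'etale fiber. What is needed is $\det d\psi\not\equiv0$, which the paper gets by taking $\operatorname{char}k\nmid M$.

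\emph{Kernel.} Reducing $\ker d$ to Witt's relations plus trace relations $(L,u)-\sum_i(k,a_i)$ is correct. But realizing these by homotopies is the whole content of the step (the analogue of Burnside's theorem), and you only assert it. It is not routine, for three reasons. Along an $\A^1$-family of framed rational points the Jacobian is a unit of $k[t]$, hence constant, so even $e(k,v)=e(k,vs^2)$ requires auxiliary points. A trace relation for $[L:k]=d$ requires a family of degree $d$, not of quadratic polynomials. And the natural pencils, pointed rational functions $A_t/B_t$ with $A_t$ monic, keep $\res(A_t,B_t)\in k[t]^\times=k^\times$ exactly constant. So the function $f/B$ realizing $e(L,u)$ (with $B(\alpha)=f'(\alpha)/u$) can be joined only to split configurations whose resultant matches on the nose. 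The paper gets these homotopies from Cazanave's theorem (\cref{inp:caz}), the identification $\Bez(f/B)\cong\tr_{L/k}\form u$, and a square rescaling of $v_1$ to match resultants. That rescaling is legitimate because relation (a) is proved first (\cref{thm:marks}). Your alternative via rational functions on $\PP^1$ is likewise unsupported. In the paper, reciprocity is an identity in $\GW(k)$ (\cref{prop:recipP1}); it enters only the algebraic description $\ker d=R$ of \cref{thm:reciprocity}, not the inclusion $\ker d\subseteq\ker\PT$.

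A smaller point: for homotopy invariance of $w$ you cannot just quote constancy in flat families, since supports of $\A^1$-homotopies are only assumed finite over $\A^1$. Flatness must be derived before Harder's theorem applies. The paper uses that $n$ equations cutting out a locus finite over $\A^1$ give a Cohen--Macaulay, hence torsion-free, hence free $k[t]$-module.
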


The proof uses the framed recognition theorem of Garkusha--Panin and
Elmanto--Hoyois--Khan--Sosnilo--Yakerson, Scheja--Storch duality, Harder's
theorem on forms over $k[t]$, Cazanave's classification of pointed rational
functions, and Witt's presentation of $\GW(k)$; it does not use Morel's
computation of $\pi_0$, nor the Milnor--Witt machinery through which the
Milnor conjecture enters the graded statement.

Our second result makes
the presentation explicit. For $f\in k(t)^\times$ the \emph{quadratic
divisor} $\rho(f)\in\Au(\Gamma_k)$ records the closed points of $\PP^1$ at
which $f$ has odd valuation, each decorated by the leading coefficient of
$f$ there, twisted by the Euler unit $\pi_p'(\theta_p)$; the
\emph{principal quadratic divisor} is $R(f)=\rho(f)-r(f)h_{\!A}$, corrected
by hyperbolic classes $h_{\!A}=(k,1)+(k,-1)$ to have rank zero
(\cref{def:R}).

\begin{theorem}\label{thm:reciprocity}
For $k$ perfect of characteristic $\ne2$, $\ker(d)$ is the subgroup $R$
generated by the principal quadratic divisors of rational functions on
$\PP^1_k$. Consequently, for $k$ moreover infinite, $\PT$ induces a ring
isomorphism
\[
\Au(\Gamma_k)\big/\big(\text{principal quadratic divisors}\big)
\ \xrightarrow{\ \sim\ }\ \pi_0(\sph_k)(k).
\]
\end{theorem}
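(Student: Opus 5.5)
We prove the two inclusions $R\subseteq\ker(d)$ and $\ker(d)\subseteq R$ separately. The second sentence of \cref{thm:reciprocity} then follows formally from \cref{thm:main}: $\PT$ is surjective with kernel $\ker(d)=R$.

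\emph{Step 1: $R\subseteq\ker(d)$ (reciprocity).} For $f\in k(t)^\times$, write $f=c\,g/h$ with $g,h$ monic and coprime. Since the correction term $r(f)h_{\!A}$ is designed to make $\rho(f)$ rank zero, it suffices to show that $d(\rho(f))$ is a sum of hyperbolic planes of the right rank. The natural tool is the Scheja--Storch/Bézoutian form of $g/h$ over $k[t]$. Its class in $\GW(k)$ can be computed in two ways.
\begin{enumerate}
\item Globally, the Bézoutian of $g/h$ and that of its reciprocal differ by a hyperbolic form. This is Cazanave's classification together with Harder's theorem, which says that a form over $k[t]$ is extended from $k$, so its fibres at $t=0$ and $t=\infty$ agree.
\item Locally, by the Euler--Jacobi/Scheja--Storch residue formula, the form decomposes as a sum over the closed points $p$ of $\PP^1$ of $\tr_{k(p)/k}$ of a local form. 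The local form is hyperbolic when $v_p(f)$ is even, and is $\form{\text{leading coefficient}\cdot\pi_p'(\theta_p)}$ up to hyperbolic summands when $v_p(f)$ is odd.
\end{enumerate}
Equating the two computations gives $d(\rho(f))=r(f)\,h$. This is the quadratic analogue of the sum of residues vanishing. I would first check the definition on the linear generators $f=t-a$ and $f=u(t-a)(t-b)$, and on irreducible $f=\pi_p$. The last case gives the relation $(k(p),\pi_p'(\theta_p)\cdot c)\sim$ a combination of rational points plus hyperbolic classes, which is the trace-form identity $\tr_{L/k}\form{u/\pi'(\theta)}$ of Euler/Serre.

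\emph{Step 2: $\ker(d)\subseteq R$ (presentation).} I would show that $\Au(\Gamma_k)/R$ is generated by classes $(k,u)$ and is subject to Witt's relations. The argument has three parts.
\begin{enumerate}
\item \emph{Reduction to degree one.} For $L=k(\theta)$ with minimal polynomial $\pi$ and $u=a(\theta)$, where $a\in k[t]$ has degree less than $\deg\pi$, apply $R(f)$ to suitable $f$ built from $\pi$ and $a$, for example $f=\pi\cdot a$ or $f=a/\pi$. This rewrites $(L,u)$ modulo $R$ as a combination of points of strictly smaller degree, using the Euclidean algorithm on $(\pi,a)$, much as in the proof of Weil reciprocity or Milnor's exact sequence for $K_2(k(t))$. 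Induction on the degree then reduces every class to $\Z$-combinations of $(k,u)$, $u\in k^\times$.
\item \emph{Witt's relations hold mod $R$.} The relation $(k,us^2)=(k,u)$ comes from $f=(t-a)^2\cdot$const, or from the Euler twist at a rational point. The relation $(k,u)+(k,v)=(k,u+v)+(k,uv(u+v))$ comes from a quadratic $f$ such as $f=ut^2+v$, whose odd points are the two roots; these are rational or form a degree-two point, which is reduced by the previous part.
\item \emph{Conclusion.} The resulting surjection $\GW(k)\to\Au(\Gamma_k)/R$, defined using Witt's presentation and sending $\form u\mapsto(k,u)$, is inverse to the map induced by $d$, since $d(k,u)=\form u$. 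Hence $\ker(d)=R$.
\end{enumerate}

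The main obstacle I expect is Step 1: making the local contribution at an odd-valuation point match exactly the decoration ``leading coefficient twisted by $\pi_p'(\theta_p)$'', including the point at infinity and the sign and normalization conventions in $r(f)$. A second concern is whether the degree-reduction in Step 2 terminates uniformly without needing $k$ infinite, since \cref{thm:reciprocity} is claimed for all perfect $k$.
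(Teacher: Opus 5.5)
\textbf{Step 1 does not prove the reciprocity law.} Harder's theorem compares fibres of a form over $k[t]$ at points of $\A^1$ and says nothing about a fibre at $\infty$. The pencil $V((1-s)g-sh)$, which carries the zeros of $f$ to its poles, has a leading coefficient in $t$ that vanishes at some $s\in k$; there a root escapes to infinity (\cref{fig:retop}). So it is not a family over $k[s]$ to which Harder applies, and Cazanave's classification is beside the point. The only true global identity available, $\Bez(g,h)=-\Bez(h,g)$, is trivial. All the content therefore sits in your local half, which you assert rather than prove: decomposing one form into contributions from all closed points of $\PP^1$, with the leading coefficient at $\infty$, is precisely the residue theorem of \cref{rem:scharlau}. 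Checking $t-a$, $(t-a)(t-b)$ and $\pi_p$ would not suffice either, since $\rho$ is not a homomorphism in $f$.

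The missing idea is that poles never need to be compared with zeros. The composite $d\circ\rho$ sees only parities of valuations and square classes of decorations. So writing $f=\lambda Fg^2$ with $F$ monic squarefree of degree $n$ gives $d(\rho(f))=d(\rho(\lambda F))$ and $r(f)=r(\lambda F)$ (\cref{prop:RinKer}). For $\lambda F$ one has $\varepsilon_pw_p(\lambda F)=\lambda F'(\theta_p)$, so the finite part of $d(\rho(\lambda F))$ is $\tr_{E/k}\form{\lambda F'}\cong\tr_{E/k}\form{\lambda/F'}$ on $E=k[t]/(F)$. By \cref{lem:euler} this is the form $(x,y)\mapsto$ the $t^{n-1}$-coefficient of $\lambda xy$. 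Its Gram matrix in the power basis vanishes above the antidiagonal and equals $\lambda$ on it, so it is $\lfloor n/2\rfloor h\perp[n\ \mathrm{odd}]\form\lambda$. The $\form\lambda$ cancels the term $-(k,\lambda)$ at $\infty$ (\cref{prop:recipP1}).

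\textbf{Step 2.} Your reduction to degree one and your conclusion are the paper's (\cref{prop:generate} and the final deduction). The decoration of $a/\pi$ at $p$ is $\pi'(\theta)a(\theta)$, so take $a$ representing $u/\pi'(\theta)$. The other odd points are roots of $a$, of degree $<[L:k]$, so one step lowers the degree. No Euclidean algorithm is needed, and nothing uses $|k|=\infty$, so your finite-field worry is unfounded.

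\textbf{Your witnesses for Witt's relations fail.}
\begin{itemize}
\item \emph{Relation (a).} The function $c(t-a)^2$ has no point of odd valuation, $\infty$ included, so $R(f)=0$. There is also no Euler twist at a rational point, since $\varepsilon_a=1$. A correct witness is $u(t-b)(t-a)^2$: its odd points are $b$ and $\infty$, giving $R(f)=(k,u(b-a)^2)-(k,u)$.
\item \emph{Relation (b).} More seriously, $\rho(ut^2+v)$ has rank two. When the roots are rational it is $(k,\lambda)+(k,-\lambda)$, so $R(f)$ is relation (c) of \cref{prop:MHinR}, not (b). When the root is the quadratic point $\theta^2=-v/u$ it is $(L,2u\theta)$. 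Your Step 2.1 reduces that class via $a=u$, i.e.\ via $u/\pi$, which returns the same relation $(L,2u\theta)\equiv h_{\!A}$. So this route never reaches the four-term relation (b).
\end{itemize}
Relation (b) needs a function with four points of odd valuation. For example, take $f=m(t-q)/(t(t-1))$ with $m=u+v$ and $q=v/m$; then $\rho(f)=(k,u)+(k,v)+(k,-m^3/uv)-(k,m)$. Relation (a) rewrites $(k,-m^3/uv)$ as $(k,-muv)$, and relation (c) rewrites $h_{\!A}$ as $(k,muv)+(k,-muv)$. With these substitutions, $R(f)\in R$ becomes (b).
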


The relations come from a reciprocity law on $\PP^1$ at the level of
$\GW$ (\cref{prop:recipP1}), which we prove directly and which
is, read through the degree of \cref{sec:w}, the statement that the
Scheja--Storch form of a framed point of the line is determined by its
behavior at infinity.

\subsection{The dictionary}
The two arguments correspond line by line; the table is the organizing
principle of the note and the reader may use it as a map.

\begin{center}\footnotesize
\begin{tabular}{p{0.29\textwidth}p{0.47\textwidth}p{0.15\textwidth}}
\toprule
Segal & motivic & here\\
\midrule
finite group $G$ & $\Gamma_k$ & \\
orbit $G/H$ & $\operatorname{Spec}L$, $L/k$ finite separable & \\
framing of a point & unit $u\in L^\times$, via the framing determinant &
\cref{sec:pt}\\
Burnside ring $A(G)$ & $\Au(\Gamma_k)$ & Def.\ \ref{def:Au}\\
$\pi_0^G(\sph)=$ framed $G$-bordism & framed recognition &
\cref{inp:rec}\\
(S1) collapse construction & framed correspondence & \cref{sec:pt}\\
(S2) isotopy of embeddings & shears, framing rigidity &
Thm.\ \ref{thm:rigid}\\
(S3) ring map & external product & Prop.\ \ref{prop:ptring}\\
fixed-point degree & Scheja--Storch form $w$ & Thm.\ \ref{thm:w}\\
marks $|S^H|$ & Dress map $d(L,u)=\tr_{L/k}\form u$ & Prop.\ \ref{prop:wpt}\\
(S4) equivariant transversality & Position Lemma & Thm.\ \ref{thm:position}\\
\quad --- & re-topping; density of the \'etale locus &
\cref{sec:retop,sec:moving}\\
(S5) Burnside: marks injective & every \'etale class is split &
Prop.\ \ref{prop:etalesplit}\\
\addlinespace
$A(G)\cong\pi_0^G(\sph)$ & $\GW(k)\cong\pi_0(\sph_k)(k)$ &
Thm.\ \ref{thm:main}\\
\bottomrule
\end{tabular}
\end{center}

Two rows differ in substance. The line marked ``---'' has no counterpart
in Segal: a transverse preimage in topology is a manifold, whereas a framed
zero-dimensional scheme may be non-reduced, and an extra theorem is needed
to show that this fatness carries no information in $\pi_0$. And Burnside's
theorem says the mark homomorphism is injective, whereas the Dress map has
a large kernel; so the isomorphism sits one step to the right:
\[
\begin{tikzcd}[column sep=2.6em, row sep=2.4em]
\text{Segal:} & A(G) \arrow[r, "\PT", "\cong"'] & \pi_0^G(\sph)
\arrow[r, "\text{degrees}", "\text{injective}"'] & \prod_{(H)}\Z \\
\text{here:} & \Au(\Gamma_k) \arrow[r, "\PT", "\text{onto}"']
\arrow[rr, bend right=22, "d"'] & \pi_0(\sph_k)(k)
\arrow[r, "w", "\cong"'] & \GW(k).
\arrow[l, "s"', bend right=35, dashed]
\end{tikzcd}
\]
What makes the second row work is the section $s$ of $w$ with
$\PT=s\circ d$ (\cref{thm:marks}), which has no counterpart in the first. \cref{thm:reciprocity} is
the description of that kernel; \cref{sec:complements} explains why a
kernel is to be expected and checks the picture over $\R$, where it reduces
to Segal's.

\subsection{Relation to other work}
The original computation of $\pi_0(\sph_k)(k)$ is Morel's
\cite{Morel04,Morel12}. The identification of the Scheja--Storch form with
the local $\A^1$-degree and of the B\'ezout form with residue pairings are
theorems of Kass--Wickelgren \cite{KassWickelgren19,KassWickelgren20}, with
the trace formula of \cref{prop:wpt} also in \cite{BBMMO,BMP};
we use only the elementary halves of these statements. Neshitov
\cite{Neshitov18} computed $\pfr(k)$ by framed correspondences in
characteristic zero by realizing the Milnor--Witt relations, and
Druzhinin--Kylling \cite[Thm.\ 5.1]{DK} extended this to perfect fields
of characteristic $\ne2$; both pass through the Position Lemma in a
stronger form, with reduced rational support. The present proof outsources the
relation calculus to Cazanave \cite{Cazanave} and the well-definedness of
the degree to Scheja--Storch and Harder, in the spirit of Barge--Lannes
\cite{BargeLannes}. What is new is the identification of the kernel as the
principal quadratic divisors, the re-topping proof of transversality, and
the organization around $\Au(\Gamma_k)$, which turns the analogy with Segal
into a correspondence.

\subsection{Conventions}\label{sec:conventions}
Throughout, $k$ is an infinite perfect field with
$\operatorname{char}k\ne2$,\footnote{Infiniteness of $k$ is used in \cref{inp:rec} as cited
(\cite{EHKSY} remove it in the tangentially framed model), in the split
configurations of \cref{sec:marks}, and in the genericity arguments of
\cref{sec:retop}, which fail over finite fields --- Poonen's Bertini
theorem prescribes Taylor coefficients but not leading forms.
Characteristic $\ne2$ is used throughout the quadratic form theory
(Harder, diagonalization, Witt's presentation, the binary facts in
\cref{prop:relations}, Cazanave). Perfectness is used in \cref{inp:rec}
and, for a different reason, in \cref{sec:reciprocity}; it is not used in
the Position Lemma, and the \'etale supports that lemma produces have
separable residue fields by definition.} except that \cref{thm:reciprocity} needs only
perfectness and characteristic $\ne2$. We write $\GW(k)$ for the Grothendieck--Witt ring, $\form u$ for the form
$(x,y)\mapsto uxy$, $h=\form1+\form{-1}$, $\W(k)=\GW(k)/\Z h$; $\det$ of a
form is the determinant of a Gram matrix in $k^\times/(k^\times)^2$; $\rk$
is rank. The Iverson bracket $[P]$ is $1$ if $P$ holds and $0$ otherwise.
Proofs that are routine are sketched or omitted; the full details are in
the long version of this paper.

\subsection*{Acknowledgments}
The author warmly thanks Igor Kriz for the mentorship and mathematics he has shared over the years. Kriz's intellectual bravery and zest for pioneering work have been a constant inspiration.

This manuscript is based upon work supported by the National Science Foundation under Grant No.~2604878. Any opinions, findings, and conclusions or recommendations expressed in this material are those of the author and do not necessarily reflect the views of the National Science Foundation.

\subsection*{AI use disclosure}
The program of this paper --- Segal's argument carried out over a field,
with $\Au(\Gamma_k)$ as the source and the Dress map in the role of the
marks --- is the author's. The mathematics was then developed, between
July and October 2026, in extended dialogue with AI assistants
(Anthropic's Claude Fable~5.1 and Claude Opus~5.5). In particular, the
re-topping proof of the Position Lemma (\cref{sec:retop}) was proposed by
the assistant and verified and revised by the author, and the
identification of $\ker d$ with the principal quadratic divisors
(\cref{sec:reciprocity}), including the reciprocity law on $\PP^1$ and
its normalizations, was worked out jointly, with the normalizations
checked by computer. The assistants also carried out several adversarial review passes
on intermediate drafts; those passes found errors --- among them a false
worked example and a gap in the comparison of embeddings --- which were
then corrected. The author has checked every statement and proof and
takes full responsibility for the mathematical content and for the
bibliography.

\section{Framed correspondences and the Pontryagin--Thom map}\label{sec:pt}

\subsection{Framed correspondences}
An \'etale neighborhood of a closed subscheme $Z\subset\A^n_k$ is an
\'etale $U\to\A^n_k$ with a closed immersion $Z\hookrightarrow U$ over
$\A^n$; shrinking $U$ one may assume $U\times_{\A^n}Z=Z$. A \emph{framed
correspondence of level $n$} from $\operatorname{Spec}k$ to itself is a
triple $c=(Z,U,\varphi)$ with $Z\subset\A^n_k$ a closed subscheme finite
over $k$, $U$ an \'etale neighborhood of $Z$, and
$\varphi\colon U\to\A^n$ with $Z=V(\varphi)$ as subschemes of $U$; two
triples that agree on a common refinement of the neighborhood are
identified. Write $\Fr_n(k)$ for the set of these, $\ZF_n(k)$ for the free
abelian group on it modulo $(Z'\sqcup Z'',U,\varphi)=(Z',U\smallsetminus
Z'',\varphi)+(Z'',U\smallsetminus Z',\varphi)$, $\sigma(Z,U,\varphi)=
(Z\times0,U\times\A^1,(\varphi,t))$ for stabilization, and
$\ZF(k)=\operatorname{colim}_\sigma\ZF_n(k)$. The same definitions over the
base $\A^1_k$, with supports \emph{finite over $\A^1$} --- the crucial
condition throughout --- give $\ZF(\A^1_k)$, and
\[
\pfr(k):=\coker\big(\ZF(\A^1_k)\xrightarrow{\mathrm{ev}_0-\mathrm{ev}_1}
\ZF(k)\big).
\]
Elements of $\ZF(\A^1_k)$ are \emph{$\A^1$-homotopies}. The external
product $(Z\times Z',U\times U',(\varphi,\varphi'))$ makes $\pfr(k)$ a
commutative ring with unit $[(\{0\},\A^1,x)]$. Commutativity holds only
up to a permutation of coordinates, and that is absorbed because
$GL_n(k)$, acting by linear changes of coordinates on $\A^n$ and on the
framing, acts trivially on $\pfr(k)$: an elementary matrix acts trivially
by the straight-line homotopy $e_{ij}(t\lambda)$, and
$\mathrm{diag}(1,\dots,1,\lambda)$ acts trivially because it rescales the
stabilizing coordinate, which an isomorphism of correspondences undoes.
Finally, write $T:=\A^1/(\A^1\smallsetminus0)$ and
$T^n:=\A^n/(\A^n\smallsetminus0)\simeq T^{\wedge n}$ for the motivic
spheres, and $\pi_0(\sph_k)(k):=\Hom_{\mathcal{SH}(k)}(\sph_k,\sph_k)$.

\begin{inputt}[Framed recognition \cite{GP21,EHKSY}]\label{inp:rec}
For $k$ infinite and perfect, sending $(Z,U,\varphi)$ to the composite
$T^n\to\A^n/(\A^n\smallsetminus Z)\simeq U/(U\smallsetminus Z)
\xrightarrow{\varphi}T^n$ induces a ring isomorphism
$\pfr(k)\cong\pi_0(\sph_k)(k)$.
\end{inputt}

This rests on strict homotopy invariance \cite{GP20} and cancellation
\cite{AGP}; the map is Voevodsky's \cite{VoevodskyNotes}, under which the
external product is the smash product of maps, which agrees with
composition on $\pi_0$. It is the analogue of identifying
$\pi_0^G(\sph)$ with $0$-dimensional framed $G$-bordism, and we use it as a
black box: nothing below uses a computation of $\pi_0$. (The literature
also uses closed \emph{subsets} as supports; the two conventions give the
same $\pfr(k)$ \cite[\S2.1]{EHKSY}.)

\subsection{Framing rigidity}
We write $\Omega_U=\Omega_{U/k}$ for the sheaf of K\"ahler differentials of
a $k$-scheme $U$, and recall that for $U\to\A^n$ \'etale the pullbacks
$dx_1,\dots,dx_n$ of the coordinate differentials form a basis of
$\Omega_U$, and that for a closed subscheme $Z\subset U$ with ideal sheaf
$I$ there is the conormal exact sequence
$I/I^2\to\Omega_U|_Z\to\Omega_{Z/k}\to0$, whose first map sends the class
of a function to its differential. For $c=(Z,U,\varphi)$ the Jacobian
$d\varphi=(\partial\varphi_i/\partial x_j)$ is the matrix of the
differentials $d\varphi_i$ in the basis $dx_j$, and its restriction
$\det d\varphi|_Z\in\mathcal O(Z)$ is the \emph{framing determinant} of
$c$. Since the $\varphi_i$ generate $I$, the conormal sequence reads
$\Omega_{Z/k}=\mathcal O_Z^n/(\text{rows of }d\varphi|_Z)$, so the framing
determinant is a unit if and only if $\Omega_{Z/k}=0$, i.e.\ if and only if
$Z$ is \'etale over $k$; we then speak of an \emph{\'etale support}.
Stabilization and linear changes of coordinates preserve the framing
determinant. The algebraic substitute for ``a
framing is determined up to homotopy by its determinant'' is:

\begin{theorem}[Framing rigidity]\label{thm:rigid}
Let $(Z,U,\varphi)$ and $(Z,U',\varphi')$ be framed correspondences with
the same support $Z\hookrightarrow\A^n$. On a common affine \'etale
neighborhood write $\varphi=A_0\varphi'$. Then $A_0$ is invertible near $Z$,
$u:=\det A_0|_Z\in\mathcal O(Z)^\times$ is well defined, and $(Z,U,\varphi)$
is $\A^1$-equivalent to the $u$-twist $(Z,U',(\tilde u\varphi'_1,
\varphi'_2,\dots,\varphi'_n))$ of $(Z,U',\varphi')$, for any lift $\tilde u$
of $u$; this twist depends only on $\bar u\in\mathcal O(Z_{\mathrm{red}})
^\times$. In particular, two framings of an \'etale support with the same
framing determinant give the same class, and every unit of
$\mathcal O(Z)$ is a framing determinant.
\end{theorem}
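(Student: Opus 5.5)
Since both $(\varphi_1,\dots,\varphi_n)$ and $(\varphi'_1,\dots,\varphi'_n)$ generate the ideal of $Z$ on a common affine \'etale neighborhood $W$, we can write $\varphi=A_0\varphi'$ and $\varphi'=B_0\varphi$ with $A_0,B_0\in M_n(\mathcal O(W))$. Then $(A_0B_0-1)\varphi=0$. The plan is first to show that $A_0|_Z$ is invertible. Since the $\varphi_i$ vanish on $Z$, differentiating gives $d\varphi|_Z=A_0|_Z\,d\varphi'|_Z$ on the conormal module $I/I^2$. More robustly, $I/I^2$ is generated by the classes of the $\varphi_i$ and also by those of the $\varphi'_i$, so $A_0|_Z$ and $B_0|_Z$ induce mutually surjective endomorphisms of a module presented on $n$ generators. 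This alone does not give invertibility, so we would instead argue as follows: $A_0B_0$ acts as the identity on $I/I^2$, and by Nakayama's lemma, after shrinking $W$ around $Z$, the surjection $\mathcal O_Z^n\to I/I^2$ is compared via the Koszul/regular-sequence property. Indeed, $Z=V(\varphi)$ inside the smooth $n$-dimensional $U$ with $Z$ finite, so $\varphi$ is a regular sequence near $Z$, $I/I^2$ is free of rank $n$ on the $\varphi_i$, and $A_0|_Z$ is the change-of-basis matrix, hence invertible. Openness of the invertibility locus then makes $A_0$ invertible on a neighborhood of $Z$. To see that $u=\det A_0|_Z$ is well defined, note that another choice $A_0'$ satisfies $(A_0-A_0')\varphi'=0$. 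Since $\varphi'$ is a basis of $I/I^2$, this forces $A_0\equiv A_0'$ modulo $I$, so the two matrices agree on $Z$.

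Next comes the homotopy. Write $A_0|_Z=D\cdot E$ with $D=\mathrm{diag}(u,1,\dots,1)$ and $E\in SL_n(\mathcal O(Z))$. Since $\mathcal O(Z)$ is artinian, hence semilocal, $SL_n=E_n$, so $E$ is a product of elementary matrices. Lift these elementary matrices to $\mathcal O(W)$ and lift $u$ to $\tilde u$, obtaining a matrix $\tilde A=\mathrm{diag}(\tilde u,1,\dots)\tilde E$ with $\tilde A\equiv A_0$ modulo $I$, invertible near $Z$. We then use two homotopies. The first is linear, $A_s=(1-s)A_0+s\tilde A$, applied to $\varphi'$ over $W\times\A^1$. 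Because $A_s\equiv A_0$ on $Z$, each $A_s$ is invertible near $Z\times\A^1$, and after shrinking the neighborhood the vanishing locus of $A_s\varphi'$ is exactly $Z\times\A^1$, which is finite over $\A^1$. The second contracts the elementary factors by $e_{ij}(t\lambda)$, with the support again constant. This yields $(Z,U,\varphi)\sim(Z,W,\mathrm{diag}(\tilde u,1,\dots)\varphi')$, which is the $u$-twist.

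The twist depends only on $\bar u$. If $\tilde u'$ is another unit whose reduction is $\bar u$, then $\tilde u'/\tilde u\equiv 1$ modulo the nilradical, so $\tilde u'/\tilde u=1+\nu$ with $\nu$ nilpotent on $Z$. The path $1+s\nu$ stays a unit on $Z$ and hence near $Z$, and the same constant-support homotopy as before connects the two twists. Finally, every unit $u\in\mathcal O(Z)^\times$ arises as a framing determinant: twisting a given framing by $u$ multiplies the framing determinant by $u$, so realizing any $u$ reduces to having one framing at all, which is part of the hypotheses. For an \'etale support, $\det d\varphi|_Z=u\cdot\det d\varphi'|_Z$, so equal framing determinants force $u=1$ and the twist is trivial.

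The main obstacle is checking that each homotopy is a legitimate element of $\ZF(\A^1_k)$. The \'etale neighborhood has to be shrunk uniformly in $s$ so that no extra zeros appear away from $Z\times\A^1$. To do this we would take the open subset of $W\times\A^1$ on which $\det A_s$ is a unit, which contains $Z\times\A^1$. On that subset $V(A_s\varphi')=V(\varphi')=Z\times\A^1$, since $Z=V(\varphi')$ already holds on $W$ by the assumption $U'\times_{\A^n}Z=Z$.
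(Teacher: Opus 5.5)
Your proof is correct and is essentially the paper's argument. Both use the regular-sequence structure of $\varphi$ and $\varphi'$ near $Z$ (you phrase it as freeness of $I/I^2$, the paper as Koszul exactness) to get $A_0$ invertible and well defined modulo $I_Z$, then $SL_n=E_n$ over a semilocal ring, contraction of the elementary factors by $e_{ij}(t\lambda)$, and removal of nilpotent discrepancies by paths $1+s\nu$, all with constant support $Z\times\A^1$. The one variation is that you factor over the artinian ring $\mathcal O(Z)$ and connect back to $A_0$ by the straight-line homotopy $(1-s)A_0+s\tilde A$ (legitimate since $\tilde A\equiv A_0$ modulo $I_Z$), whereas the paper factors on a neighborhood of $Z$ and then absorbs the leftover $\tilde u(1+i)$. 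In the last clause, say explicitly that realizing a prescribed unit $u$ means twisting by $uJ^{-1}$, which needs the given determinant $J$ to be a unit, i.e.\ $Z$ \'etale.
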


\begin{proof}[Sketch]
On the common neighborhood $Z$ is cut out by both regular sequences, so the
Koszul complexes are exact; writing also $\varphi'=B_0\varphi$, the rows of
$A_0B_0-1$ are syzygies of $\varphi$, hence have entries in $(\varphi)=I_Z$,
so $A_0$ is invertible along $Z$ and $A_0\bmod I_Z$ is independent of
choices. Localizing at the finitely many points of $Z$ gives a commutative
semilocal ring, over which $SL_n=E_n$ \cite[Ch.\ V, Cor.\ (9.2)]{Bass68}, so
$A_0\,\mathrm{diag}(\det A_0^{-1},1,\dots)$ is a product of elementary
matrices on a neighborhood of $Z$; each elementary factor
$e_{ij}(\lambda)$ is undone by the homotopy $e_{ij}(t\lambda)\varphi'$,
whose support is $Z\times\A^1$ for every $t$. The twist by
$\tilde u(1+i)$, $i\in I_Z$, or by $1+\nu$ with $\nu$ nilpotent on $Z$, is
likewise undone by $\tilde u(1+si)$, resp.\ $(1+s\tilde\nu)\tilde u$, which
are invertible near $Z\times\A^1$ with constant support. For the last
sentence, $d\varphi|_Z=A_0|_Z\,d\varphi'|_Z$.
\end{proof}

Two closed immersions $j_0,j_1\colon\operatorname{Spec}L\hookrightarrow
\A^N$ with framings of equal determinant also become $\A^1$-equivalent after
stabilization: stabilize both $N$ times into $\A^{2N}$, place them as
$(j_0,0)$ with framing $(\varphi_0(x),y)$ and $(0,j_1)$ with framing
$(x,\varphi_1(y))$, and slide each to the common support $(j_0,j_1)$ by the
homotopies $y-tP(x)$, $x-tQ(y)$ with $P(j_0)=j_1$, $Q(j_1)=j_0$ polynomial;
the resulting framings have block-triangular Jacobians with determinants
$\det d\varphi_0$ and $\det d\varphi_1$, and \cref{thm:rigid}
applies. Hence:

\begin{definition}\label{def:eLu}
For $L/k$ finite separable and $u\in L^\times$ let $e(L,u)\in\pfr(k)$ be
the class of any framed correspondence with support $\operatorname{Spec}L$
and framing determinant $u$. Such correspondences exist: for
$L=k(\theta)$ with minimal polynomial $f$ of degree $d$, the element
$u/f'(\theta)$ of $L=k[x]/(f)$ is represented by a unique polynomial $B$
of degree $<d$, which is nonzero and prime to the irreducible $f$, so that
$\big(V(f),\A^1\smallsetminus V(B),Bf\big)$ is a framed correspondence
with framing determinant $(Bf)'(\theta)=B'(\theta)f(\theta)+B(\theta)
f'(\theta)=B(\theta)f'(\theta)=u$, the first term vanishing because
$f(\theta)=0$. Let $E\subseteq\pfr(k)$ be the subgroup generated
by classes with \'etale support; it is generated by the $e(L,u)$.
\end{definition}

\begin{proposition}\label{prop:ptring}
The assignment $(L,u)\mapsto e(L,u)$ extends to a ring homomorphism
$\PT\colon\Au(\Gamma_k)\to\pfr(k)$ whose image is $E$. Composing with
\cref{inp:rec} gives the map of \cref{thm:main}.
\end{proposition}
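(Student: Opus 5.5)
Since $\Au(\Gamma_k)$ is the group completion of the commutative semiring of isomorphism classes of pairs $(X,u)$, it suffices to define $\PT$ on these pairs, check that it is additive and multiplicative there, and then extend by the universal property of group completion. For $(X,u)$ with $X=\bigsqcup_i\operatorname{Spec}L_i$ and $u=(u_i)$, set $\PT(X,u)=\sum_i e(L_i,u_i)$. This is well defined on isomorphism classes because $e(L,u)$ depends only on the $k$-isomorphism class of $(L,u)$: an isomorphism $\sigma\colon L\to L'$ with $\sigma(u)=u'$ turns a framed correspondence with support $\operatorname{Spec}L'$ and determinant $u'$ into one with support $\operatorname{Spec}L$ and determinant $u$. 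By the remark after \cref{thm:rigid} on comparing embeddings, any two such correspondences give the same class.

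\emph{Additivity.} The defining relation of $\ZF_n(k)$ splits a disjoint support into its pieces. So $e$ of an étale support $X=\bigsqcup\operatorname{Spec}L_i$ with framing determinant $u$ equals $\sum_i e(L_i,u_i)$, because the framing determinant restricts componentwise. This also shows that an étale support with arbitrary unit determinant represents $\PT(X,u)$, with such supports existing by \cref{thm:rigid}.

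\emph{Multiplicativity.} Take representatives $(Z,U,\varphi)$ for $(L,u)$ and $(Z',U',\varphi')$ for $(L',u')$. Their external product has support $Z\times Z'=\operatorname{Spec}(L\otimes_kL')$. This is étale because $k$ is perfect, or more simply because the product of separable algebras is separable. The Jacobian of $(\varphi,\varphi')$ is block diagonal, so its framing determinant is $\mathrm{pr}^*u\cdot\mathrm{pr}^*u'$. By the additivity step, its class is $\PT\big((L,u)\otimes(L',u')\big)$. The unit $(k,1)$ goes to $[(\{0\},\A^1,x)]$, whose determinant is $1$. Commutativity of $\pfr(k)$ was already established via the $GL_n$-triviality, so there is no ordering issue.

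\emph{Image and compatibility.} The image is the subgroup generated by the $e(L,u)$, which is $E$ by \cref{def:eLu}. The assertion about \cref{thm:main} holds by definition once we compose with the ring isomorphism of \cref{inp:rec}.

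The main obstacle is the independence of $e(L,u)$ from the chosen embedding and level, including the effect of stabilization. The plan is to rely on the stabilize-and-slide argument together with \cref{thm:rigid}, which were stated before the definition. I would then check only that stabilization preserves the framing determinant, which is already noted, so that comparisons made across different levels $n$ take place in the colimit.
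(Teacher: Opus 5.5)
Your proposal is correct and follows the paper's own proof. Well-definedness on isomorphism classes comes from the embedding-independence of $e(L,u)$, additivity from the defining relation of $\ZF_n$, and multiplicativity from the block-diagonal Jacobian of the external product, which has étale support $\operatorname{Spec}(L\otimes_kL')$. You simply spell out more of the routine bookkeeping than the paper does: the group completion, the unit, and the splitting of $L\otimes_kL'$ into fields.
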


\begin{proof}
Since $e(L,u)=e(L,\sigma u)$ for $\sigma\in\operatorname{Aut}_k(L)$, the
map $\PT$ is defined on isomorphism classes; it is additive by the defining relation of
$\ZF_n$, and multiplicative because the external product of framed points
with supports $L,L'$ and determinants $u,u'$ has \'etale support
$\operatorname{Spec}(L\otimes_kL')$ and determinant $u\otimes u'$.
\end{proof}

Steps (S1)--(S3) are complete, with $SL_n=E_n$ over commutative semilocal
rings in the role of the connectedness of $GL_n(\R)^+$. Nothing so far uses a hypothesis
on $k$.

\section{The degree}\label{sec:w}

Segal's invariant, the fixed-point degree, is elementary; its motivic
counterpart is the Scheja--Storch form, and its homotopy invariance is
1970s commutative algebra and quadratic form theory.

Let $c=(Z,U,\varphi)\in\Fr_n(k)$ with $U$ affine (possible, as $U$ is
quasi-affine by Zariski's main theorem) and $A:=\mathcal O(Z)=\mathcal
O(U)/(\varphi)$. Write $X_j=x_j\otimes1$ and $Y_j=1\otimes x_j$ for the
two copies of the coordinates on $U\times U$, so that a function
$\varphi_i$ on $U$ gives the two functions $\varphi_i(X)=\varphi_i\otimes1$
and $\varphi_i(Y)=1\otimes\varphi_i$ on $U\times U$. Since $U\to\A^n$ is
\'etale, on the open $\Omega:=(U\times U)\smallsetminus\big((U\times_{\A^n}
U)\smallsetminus\Delta_U\big)$, which contains $Z\times Z$, the ideal of
the diagonal is generated by the coordinate differences
$\delta_j:=X_j-Y_j$, which form a regular sequence. Each
$\varphi_i(X)-\varphi_i(Y)$ vanishes on the diagonal, so it can be written
\[
\varphi_i(X)-\varphi_i(Y)\ =\ \sum_{j=1}^n\delta_j\,H_{ij},\qquad
H_{ij}\in\mathcal O(\Omega),
\]
a divided-difference expansion: for $U=\A^n$ and polynomial $\varphi_i$
one may take the telescoping quotients
$H_{ij}=\big(\varphi_i(Y_1,\dots,Y_{j-1},X_j,\dots,X_n)-\varphi_i(Y_1,
\dots,Y_j,X_{j+1},\dots,X_n)\big)/(X_j-Y_j)$. The
$n\times n$ matrix $H=(H_{ij})$ has entries in $\mathcal O(\Omega)$, and
its determinant $\det(H)\in\mathcal O(\Omega)$ reduces modulo the ideal
$(\varphi\otimes1,1\otimes\varphi)$ to an element of $\mathcal O(Z\times
Z)=A\otimes_kA$; this image
\[
\Delta:=\det(H)\bmod(\varphi\otimes1,\,1\otimes\varphi)\ \in\ A\otimes_kA
\]
is the \emph{B\'ezoutian} of $c$. (For $n=1$, $U=\A^1$ and $\varphi=f$ a
polynomial, $H$ is the single entry $(f(X)-f(Y))/(X-Y)$, the classical
B\'ezoutian.) Two properties follow \cite[Satz (3.1)]{SchejaStorch}. First, $\Delta$ annihilates the
$\delta_j$: in $A\otimes_kA$ we have $\varphi\otimes1=0=1\otimes\varphi$,
so the expansion says that $H$ kills the column vector $(\delta_j)_j$,
and Cramer's rule gives $\det(H)\,\delta_j=0$; since
$Z\hookrightarrow\A^n$ is a closed immersion the $\delta_j$ generate the
kernel of the multiplication $\mu\colon A\otimes_kA\to A$, so in fact
$\Delta\cdot\ker(\mu)=0$. Second, $\Delta$ is independent of the choice
of the $H_{ij}$: two choices differ, row by row, by a syzygy of the
regular sequence $\delta_1,\dots,\delta_n$, hence by Koszul exactness by a
combination of the trivial syzygies $\delta_be_a-\delta_ae_b$; and a
determinant in which a row of $H$ has been replaced by $\delta_be_a-
\delta_ae_b$ lies in the ideal generated by the $\varphi_i(X)-
\varphi_i(Y)$ (expand it along columns $a$ and $b$ and complete each
$\delta_aH_{la}+\delta_bH_{lb}$ to $\varphi_l(X)-\varphi_l(Y)$; the
completing terms are determinants with a repeated column), so the two
determinants agree in $A\otimes_kA$ \cite{BMP}. Finally
$\Delta$ is additive over clopen decompositions $Z=Z'\sqcup Z''$: for the
idempotent $e'$ of $Z'$, $e'\otimes1-1\otimes e'\in\ker\mu$ kills
$\Delta$, so $\Delta$ lies in $A'\otimes A'\oplus A''\otimes A''$, with
components the B\'ezoutians of the two pieces.

\begin{inputt}[Scheja--Storch duality {\cite[Satz (3.3)]{SchejaStorch}}]\label{inp:ss}
Let $B$ be noetherian, $P$ a smooth $B$-algebra \'etale over
$B[X_1,\dots,X_n]$, and $A=P/(\varphi_1,\dots,\varphi_n)$ finite flat over
$B$, with B\'ezoutian $\Delta\in A\otimes_BA$. There is a unique $B$-linear
$\eta\colon A\to B$ with $(\eta\otimes\mathrm{id})(\Delta)=1$, and
$\beta(a,b):=\eta(ab)$ is a nondegenerate symmetric bilinear form on the
projective $B$-module $A$, compatible with base change in $B$.
\end{inputt}

Scheja--Storch state this for polynomial $P$ \cite[\S\S2--3]{SchejaStorch}:
$\eta$ is the element $\Theta^{-1}(1)$ of their Satz (3.3), the form
$\langle b\,|\,c\rangle=\eta(bc)$ and its compatibility with base change
are on their pp.\ 182--184. Their argument uses only that the $\delta_j$
generate the diagonal ideal and form a regular sequence, which holds on
$\Omega$. Put
$w(c):=[\beta_c]\in\GW(k)$.

\begin{theorem}\label{thm:w}
The class $w(c)$ depends only on the class of $c$ in $\pfr(k)$, and $w$ is
a ring homomorphism $\pfr(k)\to\GW(k)$.
\end{theorem}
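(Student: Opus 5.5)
We must show that $w$ is well defined on $\ZF(k)$ (respecting the relations and stabilization), that it kills $\mathrm{ev}_0-\mathrm{ev}_1$ of $\A^1$-homotopies, and that it is multiplicative.

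\textbf{Well-definedness on $\ZF(k)$.} First, $w(c)$ does not depend on the affine \'etale neighborhood. Refining $U$ does not change $A=\mathcal O(Z)$, and on the smaller neighborhood the same $H_{ij}$ restrict. So $\Delta$ is unchanged, and hence so are $\eta$ and $\beta_c$ by the uniqueness in \cref{inp:ss}. Additivity over $Z=Z'\sqcup Z''$ follows from the clopen decomposition of $\Delta$ recorded above. Indeed, $\eta=\eta'\oplus\eta''$ satisfies the normalization, so by uniqueness $\beta_c=\beta_{c'}\perp\beta_{c''}$. For stabilization, $\sigma c$ has $A$ unchanged, and we may choose $H$ block-diagonal with last entry $(t_X-t_Y)/(t_X-t_Y)=1$. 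So $\Delta$ and $\beta$ are unchanged.

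\textbf{Homotopy invariance.} This is the main step. Let $\tilde c=(\tilde Z,\tilde U,\tilde\varphi)$ be a level-$n$ correspondence over $\A^1_k$, with $\tilde Z$ finite over $\A^1$. Then $\tilde A=\mathcal O(\tilde Z)=\mathcal O(\tilde U)/(\tilde\varphi)$. The $\tilde\varphi_i$ cut out a subscheme of relative dimension $0$ in a scheme smooth of relative dimension $n$ over $\A^1$, so they form a regular sequence. Since $\tilde Z$ is finite over the regular one-dimensional base $k[t]$ and Cohen--Macaulay, it is flat, and $\tilde A$ is a finite projective $k[t]$-module. Applying \cref{inp:ss} with $B=k[t]$ gives a nondegenerate symmetric bilinear form $\tilde\beta$ on the projective $k[t]$-module $\tilde A$, compatible with base change. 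Its specializations at $t=0,1$ are therefore $\beta_{\mathrm{ev}_0\tilde c}$ and $\beta_{\mathrm{ev}_1\tilde c}$. Here one checks that the B\'ezoutian over $B=k[t]$, built from the $t$-relative coordinate differences, specializes to the B\'ezoutian over $k$; this holds because the $H_{ij}$ specialize. Harder's theorem says every nondegenerate form over $k[t]$ is extended from $k$ (using $\operatorname{char}k\ne2$). Hence $\tilde\beta\cong\beta_0\otimes k[t]$, and the two fibers are isometric. So $w$ descends to $\pfr(k)$. I expect the main obstacle to be the flatness/regular-sequence verification and matching the relative B\'ezoutian with the fibers' B\'ezoutians, not Harder.

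\textbf{Multiplicativity and unit.} For an external product $c\times c'$ on $U\times U'$ we have $A''=A\otimes_kA'$. Taking $H''=\mathrm{diag}(H,H')$ (the $\varphi$'s depend only on the first block of variables) gives $\Delta''=\Delta\otimes\Delta'$. Then $\eta\otimes\eta'$ satisfies the normalization, so $\beta_{c\times c'}=\beta_c\otimes\beta_{c'}$. The unit $(\{0\},\A^1,x)$ has $A=k$ and $H=1$, so $\eta=\mathrm{id}$ and $w=\form1$. Additivity on $\ZF$ then gives a ring homomorphism $\pfr(k)\to\GW(k)$.
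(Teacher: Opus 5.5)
Your proposal is correct and follows the paper's proof essentially step for step. The formal checks match: independence of the neighborhood, additivity via the clopen splitting of $\Delta$, stabilization adding a $1$ on the diagonal of $H$, and block-diagonal $H$ for external products. Homotopy invariance also goes the same way: a complete-intersection/Cohen--Macaulay argument gives flatness, hence freeness, of $\mathcal O(\widetilde Z)$ over $k[t]$, then \cref{inp:ss} over $B=k[t]$ with base change, then Harder's theorem. Your one-line regular-sequence claim is a compressed form of the paper's dimension count: fiber dimension $0$ gives $\dim\mathcal O_{\widetilde Z,z}\le1$ inside an $(n+1)$-dimensional regular local ring, which forces the $n$ equations to form a regular sequence. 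Finally, a finite projective $k[t]$-module is free, so Harder's theorem applies as stated.
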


\begin{proof}[Sketch]
Additivity, stability under $\sigma$ (which adds a $1$ on the diagonal of
$H$), multiplicativity ($H$ is block diagonal for an external product) and
independence of the neighborhood are formal. For homotopy invariance let
$(\widetilde Z,W,\Phi)\in\Fr_n(\A^1)$; the point is that
$\mathcal O(\widetilde Z)$ is a \emph{free} $k[t]$-module, which the
definition does not assume. At a point $z$ of $\widetilde Z$ the $n$
equations $\Phi$ cut out, in the regular local ring $\mathcal O_{W,z}$ of
dimension $n+1$, a quotient of dimension $\ge1$ by Krull's height theorem
and $\le1$ because $\widetilde Z$ is finite over $\A^1$; so $(\Phi)$ has
height $n$, and $n$ generators of a height-$n$ ideal in a regular local
ring form a regular sequence with Cohen--Macaulay quotient
\cite[Thm.\ 17.4(iii), Thm.\ 17.3(ii)]{Matsumura}. For the one-dimensional
ring $\mathcal O_{\widetilde Z,z}$, Cohen--Macaulay means that it has no
embedded points \cite[Thm.\ 17.3(i)]{Matsumura}, and over the principal
ideal domain $k[t]$ that is all
flatness asks: a torsion element of $\mathcal O(\widetilde Z)$ would be a
function supported over finitely many values of $t$, i.e.\ an embedded
point. (This is the one-dimensional case of miracle flatness
\cite[Thm.\ 23.1]{Matsumura}, and the place where the complete-intersection
structure of the support is used: the fat family $V(x^2,xt)\subset\A^2$
has the embedded point $(x,t)$ and is not flat.) Being finitely generated
and torsion-free over $k[t]$, $\mathcal O(\widetilde Z)$ is free.
\cref{inp:ss} over $B=k[t]$ gives a nondegenerate form over
$k[t]$ specializing at $0,1$ to $w$ of the two fibers; by Harder's theorem
--- every nondegenerate symmetric bilinear form on a free $k[t]$-module is
extended from $k$ \cite{Knebusch70}, \cite[Ch.\ VII, Thm.\ 3.13]{LamSerre} --- the
two specializations agree.
\end{proof}

\begin{proposition}\label{prop:wpt}
If $c=(Z,U,\varphi)$ has \'etale support, $A=\prod_iL_i$, and
$J=\det d\varphi|_Z$ has components $J_i$, then $w(c)=\perp_i\tr_{L_i/k}
\form{J_i}$. In particular $w\circ\PT=d$ and $w(e(k,v))=\form v$, so
$\ker\PT\subseteq\ker d$ and $w$ is surjective.
\end{proposition}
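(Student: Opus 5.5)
Since $\Delta$ and $\beta$ are additive over clopen decompositions of $Z$ (shown above), the plan is to reduce to a single component $Z=\operatorname{Spec}L$ with $L/k$ finite separable. There we compute the Scheja--Storch form explicitly. The key claim is that for étale support the Bézoutian is
\[
\Delta \;=\; (J\otimes 1)\cdot \epsilon \;=\;(1\otimes J)\cdot\epsilon,
\]
where $\epsilon\in L\otimes_kL$ is the separability idempotent, i.e.\ the unique element with $\mu(\epsilon)=1$ and $\epsilon\cdot\ker\mu=0$. To verify it, note that $L\otimes_kL\cong L\times(\text{rest})$ via $\mu$, with $\ker\mu$ the complementary factor. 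Because $\Delta$ annihilates $\ker\mu$, we get $\Delta=\mu(\Delta)\,\epsilon$. Then $\mu(\Delta)$ is $\det H$ restricted to the diagonal, and on the diagonal $H_{ij}$ restricts to $\partial\varphi_i/\partial x_j$: differentiate the expansion $\varphi_i(X)-\varphi_i(Y)=\sum_j\delta_jH_{ij}$, or use the telescoping formula in the polynomial case. Hence $\mu(\Delta)=\det d\varphi|_Z=J$.

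Next we identify $\eta$. For a separable algebra, the trace form identifies $\epsilon$ with the dual-basis tensor: if $\{a_r\}$ and $\{b_r\}$ are dual bases for $\tr_{L/k}(ab)$, then $\epsilon=\sum_r a_r\otimes b_r$. This is standard, since $\sum a_r\otimes b_r$ satisfies $(x\otimes1)\epsilon=(1\otimes x)\epsilon$ and $\mu(\epsilon)=\sum a_rb_r=1$ by nondegeneracy of the trace pairing. Set $\eta(a):=\tr_{L/k}(J^{-1}a)$. Then
\[
(\eta\otimes\mathrm{id})(\Delta)=\sum_r\tr(J^{-1}Ja_r)\,b_r=\sum_r\tr(a_r)\,b_r=1,
\]
the last equality being the dual-basis expansion of $1$. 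By the uniqueness in \cref{inp:ss}, this is the functional $\eta$. So $\beta(a,b)=\tr_{L/k}(J^{-1}ab)$, whose class is $\tr_{L/k}\form{J^{-1}}=\tr_{L/k}\form{J}$, because $J^{-1}=J\cdot(J^{-1})^2$ and $\form{x}=\form{xs^2}$. Summing over components gives $w(c)=\perp_i\tr_{L_i/k}\form{J_i}$.

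The consequences are then formal. $\PT(L,u)=e(L,u)$ is represented by a correspondence with support $\operatorname{Spec}L$ and determinant $u$. Thus $w\circ\PT(L,u)=\tr_{L/k}\form u=d(L,u)$ on generators, and $w\circ\PT=d$ because both sides are additive. It follows that $\ker\PT\subseteq\ker d$. Taking $L=k$ gives $w(e(k,v))=\form v$. Since $\GW(k)$ is generated as a group by the $\form v$ (diagonalization, char $\ne2$), $w$ is surjective.

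The main obstacle is the identification $\mu(\Delta)=J$, i.e.\ that $H$ restricts to the Jacobian on the diagonal of an étale neighborhood rather than a polynomial $U$. I would handle it by applying $\mu$ to the expansion modulo $I_\Delta^2$. Since the $\delta_j$ form a basis of $I_\Delta/I_\Delta^2\cong\Omega_U$ and $\varphi_i(X)-\varphi_i(Y)\equiv d\varphi_i$ there, we get $H_{ij}|_\Delta=\partial\varphi_i/\partial x_j$.
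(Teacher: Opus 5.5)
Your proof is correct and follows the paper's argument: $\Delta$ annihilates $\ker\mu$, so $\Delta=e_{A/k}(J\otimes1)$ with $e_{A/k}$ the dual-basis separability idempotent, and the uniqueness in \cref{inp:ss} then forces $\eta=\tr_{A/k}(J^{-1}\,\cdot\,)$, giving $\tr\form{J^{-1}}\cong\tr\form{J}$. The only differences are cosmetic: you first reduce to a single field $L$, which is unnecessary since the same computation works for any \'etale $A$, and you justify $\mu(\Delta)=J$ through $I_\Delta/I_\Delta^2\cong\Omega_U$, which the paper simply asserts.
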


\begin{proof}
The B\'ezoutian annihilates $\ker\mu$, and for $A/k$ \'etale the annihilator of
$\ker\mu$ is generated by the separability idempotent $e_{A/k}=\sum
a_i\otimes a_i^\vee$ (dual bases for the trace form), with $e_{A/k}\xi=
e_{A/k}(\mu(\xi)\otimes1)$. As $\mu(\Delta)=\det d\varphi=J$, we get
$\Delta=e_{A/k}(J\otimes1)$, and the defining property of $\eta$ forces
$\eta(a)=\tr_{A/k}(a/J)$. So $\beta=\tr_{A/k}\form{J^{-1}}\cong
\tr_{A/k}\form J$. This is Scheja--Storch's trace formula $\mathrm{Sp}=J\cdot\eta$
\cite[Satz (4.2) and Kor.\ (4.5)]{SchejaStorch}, cf.\
\cite{KassWickelgren19,BBMMO}.
\end{proof}

\section{Transversality: the Position Lemma}\label{sec:transversality}

\begin{theorem}[Position Lemma]\label{thm:position}
Every class in $\pfr(k)$ is a $\Z$-linear combination of classes with
\'etale support: $\pfr(k)=E$, and $\PT$ is surjective. The first assertion
holds over every field.
\end{theorem}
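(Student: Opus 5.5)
We will show that every framed correspondence $c=(Z,U,\varphi)\in\Fr_n(k)$ is $\A^1$-homotopic, through a homotopy whose support is finite over $\A^1$, to one with \'etale support. By additivity over clopen pieces we may treat each point of $Z$ separately, and stabilization plus \cref{thm:rigid} let us enlarge $n$ and change framings freely. The natural homotopy is the straight-line deformation $\Phi_t=\varphi-t\,\epsilon$ for a constant vector $\epsilon\in k^n$ (or a small polynomial perturbation). Its support $\widetilde Z=V(\Phi)\subset U\times\A^1$ must be finite over $\A^1$, and its fiber over $t=1$ should be \'etale. The second property is Sard-type genericity: in characteristic zero a generic value of $\varphi$ is regular, but in characteristic $p$ the map may be inseparable, which is why one perturbs by functions rather than by constants. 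The first property is the real problem: $U\to\A^n$ is only \'etale, so the perturbed zero locus may acquire components escaping to the boundary of $U$ or to infinity, and then the homotopy is not a legal element of $\ZF(\A^1_k)$.

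The \emph{re-topping} step handles finiteness. First we replace $c$ by a correspondence in which $\varphi$ is a polynomial map of controlled shape. Using $Z$ finite, we choose polynomials $g_i$ agreeing with $\varphi_i$ to high order along $Z$, so that $V(g)$ contains $Z$ as a clopen piece inside $U$; the difference $\varphi-g$ lies in $I_Z^N$, and the homotopy $g+s(\varphi-g)$ has constant support near $Z$ by Nakayama. Then we alter the top-degree homogeneous parts of $g$: we add $\lambda_i x_i^{D}$ with $D$ large and the leading forms chosen generic, for instance with no common projective zero. This makes $V(g)$ finite over every base along a linear family, because a family whose leading forms have no common zero at infinity is proper and quasi-finite, hence finite. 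Genericity of leading forms is where infiniteness of $k$ enters; over the field $k$ itself one must work around it, as the statement's last sentence indicates. With the tops fixed, the lower-order terms can be moved freely while the family stays finite over $\A^1$.

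Next comes density of the \'etale locus. Inside the space of polynomial maps with the fixed generic tops, the locus where $V(g)$ is \'etale is nonempty and open. A discriminant-type argument shows this, for example by exhibiting one member with $n$ distinct simple rational zeros of a product of linear forms. Joining our $g$ to such a member by a line gives a homotopy with finite support whose endpoint is \'etale. There is one subtlety: the excess zeros of $g$ lying outside $U$ must be accounted for, since the original class uses only the piece of $V(g)$ inside $U$. We would handle this by the clopen-decomposition relation, so that $c$ equals $[V(g)\text{ in }\A^n]$ minus the classes of the other components. Each of these is again a polynomial framed correspondence, and so by the same argument it is homotopic to an \'etale one.

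The main obstacle is the re-topping step. It requires simultaneously keeping the homotopy's support finite over $\A^1$, which needs control at infinity and at the boundary of the \'etale neighborhood, and arranging that the top-degree modification does not disturb the germ at $Z$. We will first try a degree-raising argument: adding $t\lambda_i x_i^D$ with $D$ exceeding the degrees of $g$ fixes the leading forms to the diagonal $\lambda_ix_i^D$ for $t\ne0$. We would then check finiteness near $t=0$ by a separate weighted-homogeneity argument. Once finiteness is secured, the \'etale perturbation and the clopen bookkeeping are routine. Surjectivity of $\PT$ then follows from \cref{prop:ptring} and \cref{inp:rec}.
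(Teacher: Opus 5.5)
Your architecture is the right one: a polynomial framing at a point, prescribed top forms, a linear family with constant tops, and clopen bookkeeping. But the step you flag as the main obstacle is exactly where the proposal breaks, and the repair you suggest cannot work.

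\textbf{First gap: re-topping.} Adding $\lambda_ix_i^D$ to $g_i$ changes the germ of the framing along $Z$ unless $x_i^D\in I^2$; in general $x_i^D$ does not even vanish on $Z$. The degree-raising family $g+t\lambda x^D$ meant to bridge this is not an $\A^1$-homotopy. Take $n=1$, $Z=\{a\}$ with $a\ne0$, and $g=x-a$. The curve $x-a+t\lambda x^D=0$ is the graph of $t=(a-x)/(\lambda x^D)$, hence irreducible. It has $D$ points over a general $t$ but only one over $t=0$, so it is not finite over the $t$-line. Because it is irreducible, no choice of neighborhood isolates the branch through $(a,0)$. This is the escape to infinity of \cref{fig:retop}: finiteness genuinely fails at $t=0$, so no weighted-homogeneity check can rescue it.

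The missing idea is that the tops can be changed with no homotopy at all. Since $I^2$ has finite colength, for $M$ large $x_j^M\equiv r_j\bmod I^2$ with $\deg r_j<M$. So $x_j^M-r_j$ is an element of $I^2$ whose top-degree part is exactly $x_j^M$. Adding it to $\varphi_j$ preserves $(\psi)\mathcal O_{\A^n,p}=I_p$ by Nakayama, and it preserves the trivialization of $I/I^2$, so by \cref{thm:rigid} the class is unchanged. If $Z$ sits at the origin, $x_i^D$ already lies in $I^2$ for $D\gg0$ and your move is free. For a non-rational point no translation achieves this, and subtracting the reduction $r_j$ is what replaces it. The only homotopy is then a family with constant tops, such as $\psi-t\gamma$. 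Your idea of joining $\psi$ by a line to the split system $\big(\prod_{l=1}^M(x_j-a_l)\big)_j$, which has the same tops, also works and even dispenses with the condition $\operatorname{char}k\nmid M$.

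\textbf{Second gap: the excess zeros.} Writing $c=[V(\psi)]-[Y]$ and treating the components of $Y$ ``by the same argument'' does not terminate. Re-topping a component $Y_j$ produces its own excess locus, of degree $M'^n-\deg Y_j$, again possibly non-reduced and typically larger, so there is nothing to induct on. You need a genericity statement making $Y$ \'etale from the outset, which the paper proves as \cref{lem:retop}(iv). One perturbs further by a generic $h$ in the finite-dimensional space $H=I^2\cap k[x]_{\le D}$, where $D=2\max_l\deg\pi_l+1$ for generators $\pi_l$ of $I$; this $h$ is invisible to the tops and to the germ along $Z$. The degree bound ensures the elements $\pi_l^2q$, with $q$ affine-linear, lie in $H$, and these prescribe the value and gradient of $h$ at any $x\notin Z$. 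So the pairs $(x,h)$ with $x$ a non-simple zero of $\psi$ off $Z$ form a set of dimension $<\dim H^n$, and a generic $k$-point $h$ avoids the closure of its image. Then $[Z,\psi]=[V(\psi)]-[Y]$ with both terms in $E$.

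\textbf{Finite fields.} Nothing in your proposal covers finite fields, despite the last sentence of the statement. Re-topping uses $k$ infinite to choose $h$ and $\gamma$, and the paper gets the ``every field'' clause from the moving lemma instead (\cref{sec:moving}).
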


The theorem has content because a transverse preimage in topology is a
manifold, whereas a framed zero-dimensional scheme may be non-reduced.
What the geometry hands us is the ``fat'' Burnside ring of framed finite
syntomic schemes, and the theorem says that the fatness carries no
information in $\pi_0$. There is also no notion of small perturbation to
fall back on. An $\A^1$-homotopy is a family over the whole affine line
whose support is finite over $\A^1$, and the zeros of a general
perturbation of a framing escape to infinity at special values of the
parameter (\cref{fig:retop}). The statement itself is not new: in the
equational model it is Neshitov's \cite[Lemma 4.11]{Neshitov18} in
characteristic zero and Druzhinin--Kylling's \cite[Lemma 6.4]{DK} over
perfect fields of any characteristic (both go on to reach reduced rational
support, \cite[Lemma 5.4]{Neshitov18}, \cite[Lemma 6.9]{DK}), and over any
field it follows from the moving lemma of \cite[Prop.\ 2.3]{BEHKSY}
(\cref{sec:moving}).
The proof given here is elementary, and we think it is the natural one.
Inside a complete-intersection ideal, Nakayama's lemma allows one to
prescribe the top-degree forms of a framing; a framing whose top forms are
$(x_1^M,\dots,x_n^M)$ is a finite flat map; and so its deformation to
\'etale position is proper over $\A^1$ by construction.

The complete-intersection structure of the support is essential. In the
unframed Hilbert scheme every punctual scheme with rational support
degenerates, through a Rees family, to a square-zero scheme and then to
reduced points, and this collapses $\pi_0$ to the rank --- it is how
$\mathrm{Hilb}(\A^\infty)$ computes $K$-theory \cite{HJNTY}. Since
$\operatorname{Spec}(k\oplus W)$ is a complete intersection only when
$\dim W\le1$, the Rees family leaves the framed world, and any proof of
\cref{thm:position} must use the framing, on pain of proving
$\pi_0(\sph_k)(k)=\Z$. In our proof the framing carries the weight in
\cref{lem:retop}(i), where Nakayama's lemma needs $n$ generators of $I_p$;
for a $\mathfrak p$-primary ideal in an $n$-dimensional regular local ring
that is equivalent to being a complete intersection, and it already fails
for $\mathfrak m^2\subset k[x,y]$.

\subsection{Re-topping}\label{sec:retop}
The proof has three steps: First we reduce to a framing given by polynomials at a single
closed point, so that there is a well-defined notion of the degree of each
equation. Second --- the heart of the matter --- we change those
polynomials so that their top-degree parts are $x_1^M,\dots,x_n^M$,
without changing the support or the class. Third we deform the new
framing by a constant vector, $\psi-t\gamma$; because the top-degree
parts are what control the behavior at infinity and they no longer
depend on $t$, no zero can escape, the family is finite over the
$t$-line, and for a generic $\gamma$ its fiber at $t=1$ is \'etale. The
reader may keep the one-variable picture of \cref{fig:retop} in mind
throughout: a monic polynomial of degree $M$ has $M$ roots for every value
of its constant term, whereas a polynomial whose leading term is allowed to
vary with $t$ loses roots to infinity exactly when that term vanishes.

\emph{Step one: polynomial framings.} By additivity and \cref{thm:rigid},
every class is a combination of classes $[(Z,V,\varphi)]$ with $Z$
punctual at a closed point $p\in\A^n$, $V\ni p$ a Zariski open subset,
and $\varphi$ a system of $n$ polynomials generating the ideal
$I_p=I(Z)_p$ of $Z$ at $p$. Indeed, for a punctual framed correspondence
$(Z,U,\varphi^U)$ the ideal of $Z$ in $\mathcal O_{U,z}$ is extended from
$I_p$ by \'etale descent, so $I_p$ needs at most $n$ generators; as
$\mathrm{ht}\,I_p=n$, it needs exactly $n$, and any $n$ generators, which
we may take in $I(Z)$, form a regular sequence. Then \cref{thm:rigid}
compares the given framing with the polynomial one up to a twist by a unit
of $\kappa(p)$, which is realized by multiplying $\varphi_1$ by a
polynomial. (This step uses neither infiniteness nor perfectness, and $p$
need not be rational.)

\emph{Why the framing may be changed at all.} If $\psi_j=\varphi_j+(\text{an element of }I^2)$ for each
$j$, then $\psi$ still generates $I$ at every point $p$ of $Z$: since
$I_p\subseteq\mathfrak p_p$ we have $I_p^2\subseteq\mathfrak p_pI_p$, so
\[
I_p=(\varphi)\ \subseteq\ (\psi)+I_p^2\ \subseteq\ (\psi)+\mathfrak p_pI_p
\ \subseteq\ I_p,
\]
and Nakayama's lemma gives $I_p=(\psi)$. Moreover $\psi$ and $\varphi$
induce the same trivialization of $I/I^2$, so by \cref{thm:rigid} the
framed correspondences $(Z,-,\varphi)$ and $(Z,-,\psi)$ have the same
class (their transition matrix is congruent to the identity modulo
$\mathfrak p_p$). In short: \emph{modifying a framing by elements of
$I^2$ is free.} This is also where the hypothesis that $Z$ be a complete
intersection is used: Nakayama needs $I_p$ to be generated by the $n$
elements $\varphi_j$, which for a punctual ideal in the $n$-dimensional
regular local ring $\mathcal O_{\A^n,p}$ is exactly the complete
intersection condition.

\emph{Step two: what the freedom buys.} The ideal $I^2$ has finite
colength, so for $M$ large every monomial, in particular $x_j^M$, is
congruent modulo $I^2$ to a polynomial $g_j$ of degree $<M$. Then
$x_j^M-g_j$ is an element of $I^2$ whose top-degree part is exactly
$x_j^M$, and adding it to $\varphi_j$ puts a prescribed top on the $j$-th
equation at no cost. That is the whole idea; the lemma below records it
together with two refinements, a genericity statement (iv) that keeps the
new zeros away from the old ones in good position, and the choice of $M$
prime to the characteristic, which keeps the Jacobian honest.

\begin{lemma}[Re-topping]\label{lem:retop}
Let $Z\subset\A^n_k$ be finite with ideal $I$, generated by polynomials
$\pi_1,\dots,\pi_r$, and let $\varphi_1,\dots,\varphi_n\in I$ generate $I$
locally along $Z$. Put $D:=2\max_l\deg\pi_l+1$ and
$H:=I^2\cap k[x]_{\le D}$, a finite-dimensional space. Choose an integer
$M$ such that
\[
M>\max\big(D,\ \deg\varphi_1,\dots,\deg\varphi_n\big),\qquad
\operatorname{char}k\nmid M,\qquad
k[x]_{<M}\twoheadrightarrow k[x]/I^2 ,
\]
and polynomials $g_j\in k[x]_{<M}$ with $g_j\equiv x_j^M\bmod I^2$. For
$h=(h_1,\dots,h_n)\in H^n$ set
\[
\psi_j\ :=\ \varphi_j\ +\ (x_j^M-g_j)\ +\ h_j .
\]
Then:
\begin{enumerate}
\item[\textup{(i)}] $\psi\equiv\varphi\bmod I^2$, and $(\psi)\mathcal
O_{\A^n,p}=I_p$ for every $p\in Z$; so $V(\psi)$ agrees with $Z$ near
$Z$, and $\psi$ induces the same trivialization of $I/I^2$ as $\varphi$.
\item[\textup{(ii)}] Each $\psi_j$ has degree exactly $M$, with top-degree
part $x_j^M$.
\item[\textup{(iii)}] The top-degree part of $\det d\psi$ is
$M^n(x_1\cdots x_n)^{M-1}$, which is nonzero; in particular
$\det d\psi\not\equiv0$.
\item[\textup{(iv)}] For $h$ outside a proper closed subset of $H^n$, the
scheme $V(\psi)\smallsetminus Z$ is \'etale over $k$.
\end{enumerate}
\end{lemma}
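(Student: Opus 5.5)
Parts (i)–(iii) are formal, and (iv) carries the content.

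For (i), note that $x_j^M-g_j\in I^2$ by the choice of $g_j$, and $h_j\in H\subseteq I^2$. Hence $\psi_j-\varphi_j\in I^2$. The Nakayama argument displayed just before the lemma then gives $(\psi)\mathcal O_{\A^n,p}=I_p$ at every $p\in Z$. It follows that $V(\psi)$ and $Z$ agree on a Zariski neighborhood of $Z$, and that the classes of $\psi_j$ and $\varphi_j$ in $I/I^2$ coincide.

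For (ii), each of $\deg\varphi_j$, $\deg g_j$ and $\deg h_j\le D$ is $<M$. So the top-degree part of $\psi_j$ is exactly $x_j^M$.

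For (iii), the degree-$(M-1)$ part of $\partial\psi_i/\partial x_j$ is $\delta_{ij}Mx_j^{M-1}$. Every other entry of $d\psi$ has degree $<M-1$. Expanding the determinant, only the diagonal term attains total degree $n(M-1)$, and it equals $M^n(x_1\cdots x_n)^{M-1}$. This is nonzero because $\operatorname{char}k\nmid M$.

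For (iv), I would use the family $h\mapsto V(\psi_h)$ over the affine space $H^n$ and prove generic smoothness of the fibers away from $Z$. Consider
\[
\mathcal V:=\{(x,h)\in(\A^n\smallsetminus Z)\times H^n:\psi_h(x)=0\},
\]
together with the projection $q\colon\mathcal V\to H^n$. The key claim is that the map $H\to k[x]/\mathfrak m_x^2$ is surjective at every closed point $x\notin Z$, and more precisely at every point of $\A^n_{\bar k}\smallsetminus Z$. For this I would use the ideal $\mathfrak a=I^2\cdot(\text{ideal of }x)^2$, and this is where the bound $D=2\max\deg\pi_l+1$ is meant to enter. Products $\pi_a\pi_b$ have degree $\le D-1$. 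Multiplying them by $1$ and by the linear forms $(x_i-x_i(p))$ stays within degree $\le D$. Since some $\pi_a$ is nonvanishing at $p$, the values and first derivatives of $\pi_a^2\cdot\ell$, for $\ell$ constant or linear, span all $1$-jets at $p$.

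Given this surjectivity, the map $\mathcal V\to\A^n\smallsetminus Z$ is a vector-bundle-like fibration with fibers affine spaces of codimension $n$ in $H^n$. The jet condition makes each equation $\psi_j(p)=0$ cut out an affine hyperplane, and these are independent because the $h_j$ vary independently. So $\mathcal V$ is smooth of dimension $\dim H^n$. Generic smoothness then applies: in characteristic $0$ by Sard, and in general I would instead argue via the incidence variety of singular zeros. The bad locus
\[
\mathcal B=\{(x,h):\psi_h(x)=0,\ \det d\psi_h(x)=0\}
\]
imposes $n+1$ independent conditions on $h$ by the $1$-jet surjectivity. This holds because the derivative of $\det d\psi_h(x)$ in the direction of $h_1=\ell\cdot(\text{stuff})$ reaches first-derivative data, and one works over $\bar k$ to handle non-rational points. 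Thus $\dim\mathcal B\le\dim H^n-1$, and its image in $H^n$ lies in a proper closed subset. Off that subset, $V(\psi_h)\smallsetminus Z$ has invertible Jacobian, so it is étale over $k$. Here perfectness is not needed, since étaleness is defined by the Jacobian criterion.

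I expect the main obstacle to be the jet-surjectivity of $H$ at points outside $Z$, uniformly over all $x\in\A^n_{\bar k}\smallsetminus Z$, within the degree bound $D$. A secondary issue is making the dimension count for $\mathcal B$ rigorous in positive characteristic. It may be necessary to first pass to an open where the product of the $\pi_a$ is invertible, covering $\A^n\smallsetminus Z$ by the finitely many opens $\{\pi_a\ne0\}$. The image of $\mathcal B$ could fail to be closed, but its closure is still a proper closed subset, which suffices.
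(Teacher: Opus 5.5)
Your proposal is correct and follows the paper's proof essentially verbatim: (i)--(iii) are the same formal checks, and for (iv) the paper uses exactly your $1$-jet surjectivity (via $\pi_l^2q$ with $q$ affine-linear over $\bar k$) and the same incidence count $\dim\mathcal B\le n+(\dim H^n-n-1)<\dim H^n$. The generic-smoothness detour and the ideal $\mathfrak a$ are unnecessary, and the $(n+1)$st condition is cleanest stated this way: $h\mapsto(\psi_h(x),d\psi_h(x))$ is an affine surjection onto $\bar k^n\times M_n(\bar k)$, so the bad fiber over $x$ is the preimage of $\{0\}\times\{\det=0\}$, which has codimension $n+1$.
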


\begin{proof}
(i) Both $x_j^M-g_j$ and $h_j$ lie in $I^2$, so this is the observation
above. (ii) Since $\deg\varphi_j$, $\deg g_j$ and $\deg h_j\le D$ are all
less than $M$, the only term of degree $M$ in $\psi_j$ is $x_j^M$.
(iii) Differentiating, $d\psi=M\,\mathrm{diag}(x_1^{M-1},\dots,x_n^{M-1})
+(\text{terms of degree}\le M-2)$, so the degree-$n(M-1)$ part of the
determinant is $M^n(x_1\cdots x_n)^{M-1}$, nonzero because
$\operatorname{char}k\nmid M$. (iv) This is a dimension count, and the
role of $D$ is to make it work. Fix $x\in\A^n(\bar k)$ not in $Z$; then
some generator has $\pi_l(x)=a\ne0$. For an affine-linear $q$ the
polynomial $\pi_l^2q$ lies in $I^2$ and has degree $\le D$, hence lies in
$H$, and its value and gradient at $x$ are $a^2q(x)$ and
$2aq(x)\nabla\pi_l(x)+a^2\nabla q(x)$; choosing $q$ we can prescribe both
freely. So the affine-linear map $H\to\bar k\times\bar k^n$,
$h\mapsto(h(x),\nabla h(x))$, is onto, and applying this in each
coordinate, the conditions $\psi(x)=0$ and $\det d\psi_x=0$ cut out a
subset of $H^n$ of codimension $\ge n+1$. Letting $x$ vary over the
$n$-dimensional $\A^n\smallsetminus Z$, the bad pairs $(x,h)$ form a set
of dimension $<\dim H^n$, whose projection to $H^n$ is therefore not
dense; as $k$ is infinite, a $k$-point $h$ of $H^n$ avoids its closure,
and for that $h$ every point of $V(\psi)$ off $Z$ is a simple zero, i.e.\
$V(\psi)\smallsetminus Z$ is \'etale. Since $\deg h_j\le D<M$, this choice
disturbs none of (i)--(iii).
\end{proof}

\emph{The choices in the lemma.} The lemma makes two choices, and the
conditions on each are matched to the conclusions they serve. The three
conditions on the degree $M$ do three different things: $M$ exceeds the
degrees of the $\varphi_j$ and of the correction terms so that $x_j^M$
really is the top-degree part of $\psi_j$, which is (ii); $M$ is large
enough that $x_j^M$ can be reduced modulo $I^2$ to a polynomial of lower
degree, which is what produces an element of $I^2$ with that top; and
$M$ is prime to the characteristic so that the Jacobian does not vanish
identically, which is (iii) --- in characteristic $p$ the system
$(x_1^p,\dots,x_n^p)$ has all the finiteness one could want and not a
single \'etale fiber. The nudge $h$ is a generic element of a
finite-dimensional space of low-degree elements of $I^2$. It is invisible
to (i)--(iii), and its only purpose is (iv): the re-topped system has
$M^n$ zeros in all, of which $\deg Z$ lie on $Z$, and $h$ makes the
others honest reduced points so that they can be subtracted off. One
could replace $x_j^M$ by any system of top forms $\sum_ic_{ji}x_i^M$ with
$(c_{ji})$ invertible, or indeed by any forms of degree $M$ with no
common zero but the origin and nonvanishing Jacobian; nothing is gained
by the generality.

\emph{Step three: finiteness and the deformation.} A system $\psi$ of
$n$ polynomials of degree $M$ whose top-degree parts $x_1^M,\dots,x_n^M$
have no common zero but the origin is a finite flat morphism
$\A^n\to\A^n$ of degree $M^n$. To see this,
homogenize: the closure in $\PP^n$ of a fiber $V(\psi-c)$ is
$V(\bar\psi_j-c_jX_0^M)$, which meets the hyperplane at infinity
$\{X_0=0\}$ in the common zeros of the top forms, i.e.\ nowhere; so each
fiber is a closed subscheme of $\PP^n$ contained in affine space, hence
proper and affine, hence finite. The same argument over the target shows
that $\psi$ is proper, so finite, and flatness is miracle flatness. The
point of the construction is that this applies uniformly in a family:
the top-degree parts of $\psi_j-t\gamma_j$ over $k[t]$ are still
$x_j^M$, because $\gamma$ is constant, so the subscheme
$V(\psi-t\gamma)$ of $\A^n\times\A^1_t$ is finite flat over the whole
$t$-line. That is exactly
the condition an $\A^1$-homotopy must satisfy, and it has been arranged
in advance rather than checked after the fact. The third and last
choice is the vector $\gamma\in k^n$, a generic point of the target,
whose job is to move the fiber of $\psi$ off the ramification locus: if
$\det d\psi\not\equiv0$ then $V(\det d\psi)$ is a proper closed subset of
the source, its image under the finite map $\psi$ is a proper closed
subset of the target, and every $\gamma$ off that image has \'etale
fiber $\psi^{-1}(\gamma)$; such a $k$-point $\gamma$ exists because $k$
is infinite. \Cref{fig:retop} shows the simplest instance.

\begin{figure}[t]
\centering
\begin{tikzpicture}[every node/.style={font=\scriptsize}]
\begin{axis}[name=left,width=6.4cm,height=5.2cm,axis lines=middle,
  xlabel={$t$},ylabel={$x$},xlabel style={anchor=west},ylabel style={anchor=south},
  xmin=-1.7,xmax=1.7,ymin=-2.3,ymax=2.3,xtick=\empty,ytick=\empty,clip=false]
\node[anchor=north,font=\footnotesize] at (axis description cs:0.5,-0.04) {$\varphi_t=x^2(1-tx)$};
\addplot[dark-red,very thick,domain=-1.7:1.7,samples=2] {0};
\addplot[dark-red,thick,domain=0.44:1.7,samples=100] {1/x};
\addplot[dark-red,thick,domain=-1.7:-0.44,samples=100] {1/x};
\fill[dark-red!25] (axis cs:0,0) circle (3.2pt);
\fill[dark-red] (axis cs:0,0) circle (1.4pt);
\node[dark-red,anchor=south east] at (axis cs:-0.05,0.08) {$Z=V(x^2)$};
\node[dark-red!80!black,anchor=west] at (axis cs:0.95,1.7) {$x=1/t$ escapes};
\end{axis}
\begin{axis}[at={(left.east)},anchor=west,xshift=1.3cm,width=6.4cm,height=5.2cm,
  axis lines=middle,xlabel={$t$},ylabel={$x$},xlabel style={anchor=west},
  ylabel style={anchor=south},
  xmin=-1.7,xmax=1.7,ymin=-2.3,ymax=2.3,xtick=\empty,ytick=\empty,clip=false]
\node[anchor=north,font=\footnotesize] at (axis description cs:0.5,-0.04) {$x^2-t$};
\addplot[dark-red,thick,domain=0:1.7,samples=100] {sqrt(x)};
\addplot[dark-red,thick,domain=0:1.7,samples=100] {-sqrt(x)};
\fill[dark-red!25] (axis cs:0,0) circle (3.2pt);
\fill[dark-red] (axis cs:0,0) circle (1.4pt);
\node[dark-red,anchor=south east] at (axis cs:-0.05,0.08) {$Z=V(x^2)$};
\node[dark-red!80!black,anchor=north] at (axis cs:1.0,-1.25) {$x=\pm\sqrt t$, \'etale};
\end{axis}
\end{tikzpicture}
\caption{Transversality without small perturbations, in the simplest case
$n=1$, $Z=V(x^2)$, $\varphi=x^2$ (drawn over $\R$). Left: the perturbation
$\varphi_t=x^2-tx^3$ has, for $t\ne0$, the extra zero $x=1/t$, which runs
off to infinity as $t\to0$; the support, the double line $x=0$ together
with the hyperbola $tx=1$, is not finite over the $t$-line, so this is not
an $\A^1$-homotopy. Right: $\varphi=x^2$ already has the shape required by
\cref{lem:retop}, and the family $x^2-t$ has support finite flat of degree
$2$ over the whole $t$-line, with fiber the fat point $Z$ at $t=0$ and an
\'etale fiber for every $t\ne0$.}
\label{fig:retop}
\end{figure}

\begin{proof}[Proof of \cref{thm:position}]
By step one, take $[(Z,V,\varphi)]$ punctual at $p$ with $\varphi$
polynomial generating $I_p$, and apply \cref{lem:retop} with a generic
$h$ to obtain $\psi$. By (ii), (iii) and step three there is a $\gamma$
for which $(V(\psi-t\gamma),\A^n\times\A^1,\psi-t\gamma)$ is an
$\A^1$-homotopy --- its support is finite over $\A^1$ --- from
$(V(\psi),\A^n,\psi)$ to a correspondence with \'etale support; hence
$[V(\psi),\A^n,\psi]\in E$. By (i) and (iv), $V(\psi)=Z\sqcup Y$ with $Y$
\'etale, so additivity gives $[Z,\A^n\smallsetminus Y,\psi]=
[V(\psi),\A^n,\psi]-[Y,\A^n\smallsetminus Z,\psi]\in E$. Finally, by (i)
and \cref{thm:rigid}, $[Z,\A^n\smallsetminus Y,\psi]=[Z,V,\varphi]$: the
transition matrix between $\varphi$ and $\psi$ is congruent to the
identity modulo $\mathfrak p_p$, so its determinant is $1$ in
$\kappa(p)^\times$. Surjectivity of $\PT$ is \cref{prop:ptring}.
\end{proof}

\subsection{The moving-lemma route, and the density of the \'etale locus}
\label{sec:moving}
Over an arbitrary field the first assertion of \cref{thm:position}
follows from \cite[Prop.\ 2.3]{BEHKSY}, a moving lemma whose idea is
Levine's: a framed correspondence $(Z\subset\A^n,\varphi)$ of degree $d$ is
a $k$-point $x$ of the framed Hilbert scheme $X=\Hilbfr_d(\A^n_k)$ of
\cite[Def.\ 5.1.7]{EHKSY} --- finite syntomic subschemes of $\A^n$ with
trivialized conormal sheaf --- which is smooth of dimension $dn(n+1)$
\cite[\S5.1, following Def.\ 5.1.7]{EHKSY};
the moving lemma applied to $x$ and the dense open \'etale locus
$X^{\text{\'et}}$ gives tangentially framed correspondences $\alpha,\beta$
into $X^{\text{\'et}}$ with finite \'etale supports and a homotopy
$x+\alpha\sim\beta$; composing with the universal family and transporting
back to the equational model by the comparison equivalences
$L_{\A^1}h^{\mathrm{efr}}\simeq L_{\A^1}h^{\mathrm{nfr}}\simeq
L_{\A^1}h^{\mathrm{fr}}$ on affines \cite[Cor.\ 2.2.20, 2.3.25]{EHKSY}
gives $[c]\in E$. That route uses the comparison of models, the smoothness
of the framed Hilbert scheme and Bertini theorems over finite fields
\cite{Poonen,CharlesPoonen}; the proof above uses none of these, at the
price of $k$ infinite. The one ingredient of the moving route for which we
know no reference is the density of $X^{\text{\'et}}$, which \cite{BEHKSY}
use without comment; re-topping proves it.

\begin{proposition}\label{prop:dense}
For any field $k$ and $d,m\ge0$ the finite \'etale locus is dense in
$\Hilbfr_d(\A^m_k)$.
\end{proposition}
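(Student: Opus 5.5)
The plan is to show that every point $x$ of $X=\Hilbfr_d(\A^m_k)$ lies in the closure of $X^{\text{\'et}}$, and to do this by building an explicit one-parameter family through $x$. Since $X$ is locally of finite type and density of an open subset can be tested after base change to $\bar k$, I will pass to $\bar k$ at the outset (replacing $k$ by $\bar k$, which is infinite) and take $x=(Z,\varphi)$ a $\bar k$-point. Here $Z\subset\A^m$ is finite syntomic of degree $d$, and $\varphi=(\varphi_1,\dots,\varphi_m)$ is a trivialization of the conormal sheaf $I/I^2$. I will lift $\varphi$ to polynomials $\varphi_j\in I$ generating $I$ along $Z$; this is possible because $I/I^2$ is free of rank $m$ and Nakayama applies at each point of $Z$.

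The construction is \cref{lem:retop} applied pointwise at every support point simultaneously, since the lemma as stated concerns a finite $Z$ and not a single punctual one. It produces $\psi$ with the following properties:
\begin{itemize}
\item the top forms are $x_j^M$ with $\operatorname{char}\nmid M$;
\item $\psi\equiv\varphi \bmod I^2$;
\item $V(\psi)=Z\sqcup Y$ with $Y$ \'etale.
\end{itemize}
The difficulty is that the deformation $\psi-t\gamma$ of step three moves the whole fiber $Z\sqcup Y$, of degree $M^m$, rather than $Z$ alone. So instead of using that family directly, I will restrict it to the component over $Z$. Step three shows that $W=V(\psi-t\gamma)\to\A^1_t$ is finite flat. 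Over a neighborhood of $t=0$, $W$ splits as a disjoint union $W_Z\sqcup W_Y$, because $Y$ and $Z$ are disjoint and open-closed in the fiber and $W$ is finite over a henselization (or an \'etale neighborhood) of $0$. The piece $W_Z$ is then finite flat of degree $d$ over an \'etale neighborhood $(T,0)\to(\A^1,0)$. It is cut out by $\psi-t\gamma$ in $\A^m\times T$ near $W_Z$, so it is syntomic and carries the framing $\psi-t\gamma$ restricted to its conormal sheaf, which is a trivialization since the $m$ equations form a regular sequence (the argument in the proof of \cref{thm:w}).

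This gives a morphism $T\to X$ sending $0$ to $(Z,\psi|)$. That point equals $x$, because $\psi$ and $\varphi$ induce the same trivialization of $I/I^2$ by \cref{lem:retop}(i). The fiber over the generic point of $T$ is an open-closed piece of the fiber $\psi^{-1}(t\gamma)$, and this fiber is \'etale for generic $t$. Indeed $t\gamma$ avoids the proper closed image $\psi(V(\det d\psi))$ for all but finitely many $t$ once $\gamma$ is chosen off the cone over that image, and \cref{lem:retop}(iii) guarantees $\det d\psi\not\equiv0$. Hence a curve through $x$ has generic point in $X^{\text{\'et}}$, so $x\in\overline{X^{\text{\'et}}}$.

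I expect the main obstacle to be the splitting step: showing that the degree-$d$ part of the family specializing to $Z$ is genuinely a flat family over a pointed \'etale neighborhood of $0$, and that this is enough for density in $X$. I would handle it by working over $\operatorname{Spec}\mathcal O^h_{\A^1,0}$ and using that closed points of $T$ near $0$ give $\bar k$-points of $X^{\text{\'et}}$ approaching $x$. A second point needing care is the case of $k$ finite: base change to $\bar k$ takes care of it, because $X^{\text{\'et}}$ is open and formation of closure commutes with field extension. No infiniteness of $k$ itself is then needed, which matches the claim that the statement holds for any field.
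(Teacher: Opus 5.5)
Your proposal is correct and is essentially the paper's proof: you lift the framing, re-top, deform by $\psi-t\gamma$, keep only the degree-$d$ piece over $Z$ after passing to the henselization at $t=0$, and reduce to an infinite field by base change. Going straight to $\bar k$ is, if anything, cleaner than the paper's passage through residue fields of closed points, which are finite when $k$ is finite.

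One small fix: all you need is $\gamma\notin\psi(V(\det d\psi))$, as in the paper, since then the line $\{t\gamma\}$ meets this proper closed set for only finitely many $t$. Asking for $\gamma$ off the cone over that set is unnecessary and, read literally, can be impossible. For example, when $m=1$ and some critical value is nonzero, that cone is the whole line.
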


\begin{proof}[Sketch]
For $k$ infinite and $x=(Z,\tau)\in X(k)$, lift $\tau$ to $\varphi_j\in I$
generating $I$ along $Z$, re-top to $\psi$, and choose $\gamma$ with
$\psi^{-1}(\gamma)$ \'etale. The family $V(\psi-t\gamma)\subset\A^m\times
\A^1$ is finite flat over $\A^1$ with conormal sheaf trivialized by
$\psi-t\gamma$, of degree $M^m$, with fiber $Z\sqcup Y$ over $0$ and
\'etale generic fiber. Over the henselization $S$ of $\A^1$ at $0$ it
splits into local pieces; the union $W$ of those over the points of $Z$ is
finite flat of degree $d$ over $S$ with closed fiber $(Z,\tau)$, i.e.\ a
morphism $S\to X$ sending the closed point to $x$ and the generic point into
$X^{\text{\'et}}$. So $x\in\overline{X^{\text{\'et}}}$. For $k$ arbitrary,
a closed point of the open $X\smallsetminus\overline{X^{\text{\'et}}}$ would
lift to a $\kappa(s)$-point of the base change to the infinite field
$\kappa(s)$, contradicting the first part.
\end{proof}

\section{Marks: \texorpdfstring{$\PT$}{PT} factors through the Dress map}\label{sec:marks}

Segal's step (S5) composes $\PT$ with a computable invariant and invokes
Burnside's theorem. Our composite is $d$ (\cref{prop:wpt}); what
must be shown is the converse $\ker d\subseteq\ker\PT$, and we prove the
sharper statement that a decorated \'etale scheme is determined, as a
framed class, by its scaled trace form.

\begin{theorem}\label{thm:marks}
There is a ring homomorphism $s\colon\GW(k)\to\pfr(k)$ with
$s\form v=e(k,v)$ such that $w\circ s=\mathrm{id}$ and $\PT=s\circ d$;
that is, in the diagram
\[
\begin{tikzcd}[row sep=large, column sep=large]
\Au(\Gamma_k) \arrow[r, "\PT"] \arrow[dr, "d"'] & \pfr(k)
\arrow[d, "w", shift left=0.7ex] \\
& \GW(k) \arrow[u, "s", shift left=0.7ex]
\end{tikzcd}
\]
the triangle $w\circ\PT=d$ of \cref{prop:wpt} is complemented by
$\PT=s\circ d$, and $s$ is a section of $w$. In particular
$\ker d\subseteq\ker\PT$ and $E=\operatorname{im}(s)$.
\end{theorem}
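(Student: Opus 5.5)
We construct $s$ from Witt's presentation of $\GW(k)$. As an abelian group (char $\ne2$), $\GW(k)$ is generated by symbols $\form v$, $v\in k^\times$, subject to $\form{vs^2}=\form v$ and $\form a+\form b=\form{a+b}+\form{ab(a+b)}$ for $a,b,a+b\ne0$. The multiplicative law $\form a\form b=\form{ab}$ then holds automatically. So we set $s\form v:=e(k,v)$, and the work is to check these two relations in $\pfr(k)$. Once that is done, $s$ is additive. It is multiplicative because $e(k,a)e(k,b)=e(k,ab)$ by \cref{prop:ptring}, and the $\form v$ generate $\GW(k)$ additively. Also $w\circ s=\mathrm{id}$, because $w(e(k,v))=\form v$ by \cref{prop:wpt}.

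\textbf{The two relations.}

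\emph{Square classes.} The relation $e(k,vs^2)=e(k,v)$ is not covered by \cref{thm:rigid}, since the determinants differ. We realize it in level $2$ with the diagonal matrix $\mathrm{diag}(s,s)$. The correspondence $(\{0\},\A^2,(vx,y))$ has determinant $v$. Acting on it by $\mathrm{diag}(s,s)\in GL_2(k)$ changes the determinant by $s^2$, and $GL_n(k)$ acts trivially on $\pfr(k)$.

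\emph{Witt relation.} We use a one-variable homotopy, in the spirit of Cazanave. Take $n=1$, $U=\A^1$, and a family of polynomials of degree $2$ whose leading coefficient is independent of $t$, so that the support is finite flat over the $t$-line, as in step three of \cref{sec:retop}.
\begin{itemize}
\item At $t=0$, take a split quadratic $c(x-\alpha)(x-\beta)$. Its roots are rational and its derivatives at the roots are $\pm c(\alpha-\beta)$.
\item Choose $c$ and $\alpha-\beta$ so that these derivatives are $a$ and $b$ up to squares. This realizes $e(k,a)+e(k,b)$.
\item Along the family, move only the constant term to reach another split quadratic whose roots realize $\form{a+b}$ and $\form{ab(a+b)}$.
\end{itemize}
Equivalently, we use the homotopy $x^2-t\gamma$ style families of \cref{fig:retop}, and identify each end with the correct sum using the square-class relation and \cref{thm:rigid}. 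Infiniteness of $k$ is used here to choose the parameters so that every fiber we read off is split.

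\textbf{The identity $\PT=s\circ d$, i.e.\ $e(L,u)=s(\tr_{L/k}\form u)$.} This is the heart, and the step I expect to be hardest. The plan is to homotope a framed point with support $\operatorname{Spec}L$ to one with split rational support.
\begin{itemize}
\item Write $L=k(\theta)$ with minimal polynomial $f$ of degree $d$. By \cref{def:eLu}, $e(L,u)$ is represented by $(V(f),\A^1\smallsetminus V(B),Bf)$.
\item By the stabilization and re-topping trick, replace this by a level-one framing $F=f\cdot B$ that is monic of fixed degree. Deform the lower coefficients linearly, $F_t=(1-t)F+tG$, to a polynomial $G$ with $d$ distinct rational roots. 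Since $F_t$ is monic, the support is finite over $\A^1$.
\item The extra zeros of $F$ (those of $B$) must be handled: arrange by genericity that they are étale, and subtract them by additivity as in the proof of \cref{thm:position}.
\end{itemize}
This gives the following. \emph{Every element of $E$ is a $\Z$-combination of $e(k,v)$'s.} This is Proposition \ref{prop:etalesplit} of the table, ``every étale class is split,'' and it yields $E=\operatorname{im}(s)$.

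To get the exact formula, apply $w$. For $x$ in $E$, write $x=\sum n_ie(k,v_i)=s(\sum n_i\form{v_i})$. Then $w(x)=\sum n_i\form{v_i}$, so $x=s(w(x))$, since $w\circ s=\mathrm{id}$ and $s$ is well defined. Applying this to $x=\PT(L,u)$ gives $\PT(L,u)=s(w(\PT(L,u)))=s(d(L,u))$ by \cref{prop:wpt}.

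The main obstacle is therefore splitting: showing that $e(L,u)$ lies in the span of rational framed points. The one-variable monic-deformation homotopy is the first thing to try. The delicate point is controlling the auxiliary zeros of $B$ so that only étale and rational pieces appear. If that fails, I would induct on $[L:k]$ using the Witt relation to absorb the quadratic pieces.
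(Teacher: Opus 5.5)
Your proposal has a genuine gap in the construction of $s$. Neither of your arguments establishes Witt's relations in $\pfr(k)$.

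\emph{The square-class relation.} The $GL_n(k)$-action that acts trivially on $\pfr(k)$ is the \emph{simultaneous} change of coordinates on $\A^n$ and on the framing. That is why a diagonal matrix can be undone by an isomorphism of correspondences. Under this action the Jacobian is conjugated, $d\varphi\mapsto g\,d\varphi\,g^{-1}$, so the framing determinant does not change. Concretely, $\mathrm{diag}(s,s)$ sends $(\{0\},\A^2,(vx,y))$ to the framing $(svx,sy)$, which in the new coordinates $(x',y')=(sx,sy)$ is $(vx',y')$ again. Acting on the framing alone does multiply the determinant by $s^2$. But for that action you only know that elementary matrices act trivially, hence so does $SL_2(k)=E_2(k)$. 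Since $\mathrm{diag}(s,s)=\mathrm{diag}(s,s^{-1})\,\mathrm{diag}(1,s^2)$, \cref{thm:rigid} shows that triviality of $\mathrm{diag}(s,s)$ on framings \emph{is} the relation $e(k,vs^2)=e(k,v)$. The argument is circular.

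\emph{The Witt relation.} A quadratic polynomial framing on $U=\A^1$ with rational roots has derivatives $\pm c(\alpha-\beta)$ there, as you yourself note. So its class is $e(k,\lambda)+e(k,-\lambda)$, which matches $e(k,a)+e(k,b)$ only when $-ab$ is a square. No choice of such family can do better. By \cref{thm:w} and \cref{prop:recipP1}, every degree-two polynomial framing of $\A^1$ has $w=h$, while $w\big(e(k,1)+e(k,1)\big)=2\form1\ne h$ over $\Q$. This mechanism gives at most relation (c) of \cref{prop:relations}, never (b).

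\emph{What is missing.} The framing must be allowed poles: use level-one correspondences $(V(A),\A^1\smallsetminus V(B),A/B)$ given by pointed rational functions.
\begin{itemize}
\item Choosing $B$ by Lagrange interpolation puts arbitrary units $v_i$ at rational points $a_i$.
\item Along a naive homotopy the resultant $\res(A,B)\in k^\times$ is constant on the nose, not merely modulo squares. For a lone point it equals $1/v$, so no naive homotopy changes a single point's decoration.
\item This is why (a) needs a companion point: units $(v,1)$ and $(vt^2,1)$ at separations $1$ and $t$ both have resultant $-1/v$.
\end{itemize}
The relations then follow from Cazanave's theorem (\cref{inp:caz}) that B\'ezout form and resultant determine the naive homotopy class; this is the paper's proof of \cref{prop:relations}. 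Alternatively, one can use explicit homotopies of this kind in the style of Neshitov and Druzhinin--Kylling.

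\emph{The splitting half.} By contrast, this part of your plan can be completed, and it is a genuinely different route from \cref{prop:etalesplit}, where the paper applies \cref{inp:caz} directly to $f/B$ and a rescaled split configuration. Genericity for the zeros of $B$ is not available, since $B$ is determined by $\theta$ and $u$. You can instead induct on the residue degree of a possibly fat punctual support $V(\pi^m)\subset\A^1$:
\begin{itemize}
\item use \cref{thm:rigid} to take the framing $\pi^mg$ on $\A^1\smallsetminus V(g)$ with $\deg g<\deg\pi$;
\item deform the polynomial $\pi^mg$ on all of $\A^1$, with fixed leading coefficient, to a polynomial with distinct rational roots;
\item subtract the pieces supported on $V(g)$, which have smaller residue degree.
\end{itemize}
This shows that $E$ is spanned by the $e(k,v)$ without using Cazanave, and your identity $x=s(w(x))$ then gives $\PT=s\circ d$. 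But that identity only makes sense once $s$ is well defined, which is exactly the step that is missing.
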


Everything in this section takes place on the affine line: the
homotopies are pencils of pointed rational functions, that is, level-one
framed correspondences whose support is the zero set of a monic polynomial
in one variable, and the invariants are those of maps $\PP^1\to\PP^1$. A
\emph{pointed rational function of degree $d$} is
$F=A/B$ with $A$ monic of degree $d$, $\deg B<d$ and $\res(F):=\res(A,B)=
\prod_{A(\rho)=0}B(\rho)\ne0$; these form an open $\mathcal F_d\subseteq
\A^{2d}$, and two are \emph{naively homotopic} if connected by a chain of
maps $\A^1\to\mathcal F_d$. To $F$ corresponds the level-one framed
correspondence $c(F)=(V(A),\A^1\smallsetminus V(B),A/B)$, and a naive
homotopy is an $\A^1$-homotopy, because $A_t$ monic makes the support
finite over $\A^1$ and $\res(A_t,B_t)\in k[t]^\times=k^\times$ keeps $B_t$
invertible on it. If $A$ is separable with $V(A)=\coprod\operatorname{Spec}
L_i$, $L_i=k(\theta_i)$, then
\begin{equation}\label{eq:cF}
[c(F)]=\sum_ie\big(L_i,A'(\theta_i)/B(\theta_i)\big)\in E,
\end{equation}
the framing determinant of $A/B$ at $\theta_i$ being $A'(\theta_i)/
B(\theta_i)$. The \emph{B\'ezout form} $\Bez(F)$ is the symmetric form on
$k^d$ with Gram matrix $(b_{ij})$, where $\big(A(X)B(Y)-A(Y)B(X)\big)/(X-Y)=
\sum b_{ij}X^iY^j$.

\begin{inputt}[Cazanave {\cite[Thm.\ 3.6, Cor.\ 3.10]{Cazanave}}]\label{inp:caz}
Two pointed rational functions of the same degree are naively homotopic if
their B\'ezout forms are isometric and their resultants are equal in
$k^\times$.
\end{inputt}

Cazanave proves that $F\mapsto(\Bez(F),\res(F))$ classifies naive
homotopy classes; we use only this direction, and neither his monoid
structure nor his theorem (via Morel) that the naive classes
group-complete to $[\PP^1,\PP^1]^{\A^1}$. The link to \cref{sec:w} is
that the B\'ezout form is a trace form: for $A$ separable with roots
$\rho_i$, Vandermonde $W=(\rho_i^{\,j})$ and $D=\mathrm{diag}
(A'(\rho_i)B(\rho_i))$, evaluating the B\'ezoutian at pairs of roots gives
$WbW^{\top}=D$, while the Gram matrix of $\tr_{L/k}\form{A'B}$ on
$L=k[x]/(A)$ in the power basis is $G=W^{\top}DW$; so $G=SbS$ with
$S=W^{\top}W$ the invertible Gram matrix of $\tr_{L/k}\form1$, and
\begin{equation}\label{eq:bezw}
\Bez(F)\cong\tr_{L/k}\form{A'B}\cong\perp_i\tr_{L_i/k}\form{A'(\theta_i)
/B(\theta_i)}=w\big(c(F)\big)
\end{equation}
by \cref{prop:wpt} (the separable case of
\cite{KassWickelgren20}). Taking determinants, $\det G=\disc(A)^2\det b$ and
$\det G=\disc(A)\res(A,A')\res(A,B)$, whence
$\det\Bez(F)=(-1)^{d(d-1)/2}\res(A,B)$; so \emph{two pointed rational
functions of the same degree with separable numerators and isometric
B\'ezout forms have resultants that agree modulo squares.}

For distinct $a_1,\dots,a_d\in k$ and $v_i\in k^\times$ the \emph{split
configuration} $G(a;v)=A/B$ has $A=\prod(x-a_i)$ and $B$ the Lagrange
interpolant, the unique polynomial of degree $<d$ with the prescribed
values $B(a_i)=A'(a_i)/v_i$ (this needs $|k|\ge d$, with $d$ unbounded
below); by \eqref{eq:cF} and \eqref{eq:bezw},
\[
[c(G(a;v))]=\sum_ie(k,v_i),\quad\Bez(G(a;v))\cong\form{v_1,\dots,v_d},
\quad\res(G(a;v))=\prod_iA'(a_i)/v_i ,
\]
and replacing $v_1$ by $v_1t^{-2}$ keeps the form and multiplies the
resultant by $t^2$.

\begin{proposition}\label{prop:relations}
In $\pfr(k)$, for all $v,w,t\in k^\times$: \textup{(a)} $e(k,vt^2)=e(k,v)$;
\textup{(b)} $e(k,v)+e(k,w)=e(k,v+w)+e(k,vw(v+w))$ if $v+w\ne0$;
\textup{(c)} $e(k,v)+e(k,-v)=e(k,1)+e(k,-1)$. Hence $\form v\mapsto e(k,v)$
defines a ring homomorphism $s\colon\GW(k)\to\pfr(k)$ with $w\circ s=
\mathrm{id}$.
\end{proposition}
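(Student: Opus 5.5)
Each of (a)--(c) will be an equality of two split configurations. I will produce two pointed rational functions of the same degree with equal resultants and isometric B\'ezout forms. \cref{inp:caz} then makes them naively homotopic, so their classes $[c(F)]$ agree in $\pfr(k)$. The classes are then read off from \eqref{eq:cF}. Since $k$ is infinite, I can always choose distinct $a_i\in k$.

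For (a), take $d=1$ with $F=(x-a)/(1/v)$ and $F'=(x-a)/(1/(vt^2))$. Both have B\'ezout form $\form{v}\cong\form{vt^2}$. Their resultants $1/v$ and $1/(vt^2)$ differ by a square, but in degree one the resultant must agree exactly. So I pass to degree two and use the padding trick: compare $G(a_1,a_2;vt^2,c)$ with $G(a_1,a_2;v,ct^{-2})$ for some auxiliary $c$. Both have form $\form{v,c}$, and their resultants are equal, since $\prod A'(a_i)/v_i$ is the same for both. Hence $e(k,vt^2)+e(k,c)=e(k,v)+e(k,ct^{-2})$. Applying the same argument with $v$ and $c$ swapped does not close the loop directly. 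Instead I choose $c$ and $t$ cleverly: take $c=vt^2$, so the right side becomes $e(k,v)+e(k,v)$ and the left becomes $2e(k,vt^2)$. This gives only $2(e(k,vt^2)-e(k,v))=0$, so a better choice is needed.

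The cleaner route is to absorb the resultant discrepancy with the free positions $a_i$. The resultant of $G(a;v)$ is $\prod_iA'(a_i)/v_i$, which in degree two equals $-(a_1-a_2)^2/(v_1v_2)$. Take $G(a_1,a_2;vt^2,w)$ and $G(a_1',a_2';v,w)$ with $a_1'-a_2'=(a_1-a_2)/t$. The resultants then agree and the forms are $\form{vt^2,w}\cong\form{v,w}$. This gives $e(k,vt^2)+e(k,w)=e(k,v)+e(k,w)$, which is (a).

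For (b), the binary isometry $\form{v,w}\cong\form{v+w,vw(v+w)}$ holds, and both sides have determinant $vw$ modulo squares. I compare $G(a;v,w)$ with $G(a';v+w,vw(v+w))$. The resultants are $-(a_1-a_2)^2/(vw)$ and $-(a_1'-a_2')^2/(vw(v+w)^2)$, so choosing $a_1'-a_2'=(v+w)(a_1-a_2)$ makes them equal.

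For (c), I use $\form{v,-v}\cong h$. The resultants are $(a_1-a_2)^2/v^2$ and $(a_1'-a_2')^2$, so I rescale the difference by $v$.

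Thus (a)--(c) all follow from one principle: in degree two the resultant class $\det\Bez$ is fixed modulo squares, and the square factor can be matched exactly by rescaling the gap $a_1-a_2$.

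For the final assertion I invoke Witt's presentation of $\GW(k)$. As an abelian group, $\GW(k)$ is generated by the symbols $\form v$ subject to (a), (b) and (c). Strictly, (c) follows from (b) when $\form{v,-v}$ represents $1$, but it is harmless to include. So $s$ is a well-defined additive map. It is multiplicative because $e(k,v)e(k,v')=e(k,vv')$ by \cref{prop:ptring}. The identity $w\circ s=\mathrm{id}$ holds because $w(e(k,v))=\form v$ by \cref{prop:wpt}.

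The main obstacle is exact equality of resultants rather than equality modulo squares, since \cref{inp:caz} requires the resultants to be equal in $k^\times$. The gap-rescaling device must be checked carefully, in particular the value $A'(a_1)A'(a_2)=-(a_1-a_2)^2$.
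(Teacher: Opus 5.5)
Your argument is correct and is essentially the paper's proof. Each relation is realized by two degree-two split configurations with isometric B\'ezout forms, and their resultants $-(a_1-a_2)^2/(v_1v_2)$ are made exactly equal by rescaling the separation (by $1/t$, $v+w$, and $1/v$ respectively); \cref{inp:caz}, Witt's presentation, \cref{prop:ptring} and \cref{prop:wpt} then finish as you say. The abandoned degree-one and padding attempts can simply be deleted, as can the unused aside that (c) follows from (b), which is unjustified as stated since (b) requires $v+w\neq0$.
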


\begin{proof}
The strategy for each relation is the same: write both sides as the
classes of degree-two split configurations $G$ and $G'$, check that
$\Bez(G)\cong\Bez(G')$ and $\res(G)=\res(G')$, and conclude from
\cref{inp:caz} that $G$ and $G'$ are naively homotopic, so that
$[c(G)]=[c(G')]$. For a degree-two configuration with points $a_1,a_2$
and units $v_1,v_2$ one has $A'(a_1)=a_1-a_2=-A'(a_2)$, so its resultant
is $-(a_1-a_2)^2/(v_1v_2)$; the B\'ezout form is $\form{v_1,v_2}$
regardless of the points, and the resultant can be adjusted at will by
moving the points.

(a) Let $G$ have units $(v,1)$ and $a_1-a_2=1$, and $G'$ units $(vt^2,1)$
and $a_1-a_2=t$. The forms $\form{v,1}$ and $\form{vt^2,1}$ are
isometric, and both resultants equal $-1/v$. Hence
$e(k,v)+e(k,1)=e(k,vt^2)+e(k,1)$, and $e(k,1)$ cancels.

(b) Let $G$ have units $(v,w)$ and separation $\delta$, and $G'$ units
$(v+w,\,vw(v+w))$ and separation $\delta(v+w)$. The form $\form{v,w}$
represents $v+w$ and has determinant $vw$, so it is isometric to
$\form{v+w,\,vw(v+w)}$; and both resultants equal $-\delta^2/(vw)$.

(c) Let $G$ have units $(v,-v)$ and separation $\delta$, and $G'$ units
$(1,-1)$ and separation $\delta/v$. The form $\form{v,-v}$ is isotropic
with determinant $-1$, hence isometric to $\form{1,-1}$; and both
resultants equal $\delta^2/v^2$.

Relations (a)--(c) are exactly Witt's presentation of $\GW(k)$
\cite[Satz 7]{Witt37}, \cite[Thm.\ I.5.2]{Lam05}, so $s$ is a well-defined additive
map; it is multiplicative by \cref{prop:ptring}, and $w\circ s=\mathrm{id}$
on generators by \cref{prop:wpt}.
\end{proof}

\begin{proposition}\label{prop:etalesplit}
For every finite separable $L/k$ and $u\in L^\times$,
$e(L,u)=s(\tr_{L/k}\form u)$.
\end{proposition}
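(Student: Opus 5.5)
We will prove that $e(L,u)$ and a split configuration representing $\tr_{L/k}\form u$ are naively homotopic, using \cref{inp:caz}. Write $L=k(\theta)$ with minimal polynomial $f$ of degree $d$. By \cref{def:eLu}, $e(L,u)=[c(F)]$ for the pointed rational function $F=f/B$, where $B$ is the unique polynomial of degree $<d$ with $B(\theta)=f'(\theta)/u$. (We use this normalization so that $A'(\theta)/B(\theta)=u$ in \eqref{eq:cF}. Up to replacing $u$ by $u^{-1}$, this is the framing $Bf$ of \cref{def:eLu}, and by \cref{thm:rigid} only the determinant matters.) Then $F$ has degree $d$, its numerator is separable, and by \eqref{eq:bezw} its B\'ezout form is $\tr_{L/k}\form u$.

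Next we diagonalize $\tr_{L/k}\form u\cong\form{v_1,\dots,v_d}$, which is possible since $\operatorname{char}k\ne2$. Since $k$ is infinite, we can choose distinct $a_1,\dots,a_d\in k$ and form the split configuration $G=G(a;v)$. Its class is $\sum_ie(k,v_i)=s(\tr_{L/k}\form u)$, and its B\'ezout form is $\form{v_1,\dots,v_d}\cong\Bez(F)$.

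It remains to match resultants. Both $F$ and $G$ have separable numerators and isometric B\'ezout forms, so by the determinant computation after \eqref{eq:bezw} their resultants agree modulo squares: $\res(F)=c^2\res(G)$ for some $c\in k^\times$. We then adjust $G$ without changing its class or its form. Replacing $v_1$ by $v_1c^{-2}$ keeps the form, multiplies the resultant by $c^2$, and leaves the class unchanged by \cref{prop:relations}(a). After this change $\res(G)=\res(F)$. Now \cref{inp:caz} gives a naive homotopy $F\simeq G$. A naive homotopy is an $\A^1$-homotopy, so $[c(F)]=[c(G)]$, i.e.\ $e(L,u)=s(\tr_{L/k}\form u)$.

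The only delicate point is making sure the resultant normalization is right, i.e.\ that the framing determinant of $c(F)$ really is $u$ with the sign and inverse conventions used in \eqref{eq:cF}. If instead one gets $u^{-1}$, the argument still goes through: $\tr_{L/k}\form{u^{-1}}\cong\tr_{L/k}\form u$, since $\form{u^{-1}}=\form{u\cdot u^{-2}}$. The resultant adjustment always works because the determinant identity $\det\Bez=(-1)^{d(d-1)/2}\res$ pins resultants down only up to squares, which is exactly the freedom the rescaling of $v_1$ provides.
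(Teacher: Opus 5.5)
Your proposal is correct and follows the paper's proof essentially step for step: you realize $e(L,u)$ as $[c(f/B)]$ with $B(\theta)=f'(\theta)/u$, compare it with a split configuration on a diagonalization of $\tr_{L/k}\form u$, match resultants by rescaling $v_1$ using \cref{prop:relations}(a), and conclude by \cref{inp:caz}. Your hedge about $u^{-1}$ is unnecessary, since $f/B$ already has framing determinant $f'(\theta)/B(\theta)=u$, the same determinant as the framing $Bf$ of \cref{def:eLu}.
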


\begin{proof}
Let $f$ be the minimal polynomial of a primitive element $\theta$ and
$B$ the unique polynomial of degree $<d$ with $B(\theta)=f'(\theta)/u$ in
$L$, so that $F=f/B$ has
$[c(F)]=e(L,u)$ and $\Bez(F)\cong\tr_{L/k}\form u=:q$ by \eqref{eq:cF},
\eqref{eq:bezw}. Diagonalize $q\cong\form{v_1,\dots,v_d}$ and take a split
configuration $G_0$ with these units; its resultant agrees with $\res(F)$
modulo squares, and rescaling $v_1$ makes them equal. By
\cref{inp:caz}, $F$ and the rescaled $G$ are naively homotopic, so
$e(L,u)=[c(G)]=\sum_ie(k,v_i)=s(q)$, using
\cref{prop:relations}(a).
\end{proof}

This is the analogue of Burnside's theorem: every orbit is broken into
rational points once the quadratic invariant is recorded, by a pencil of
pointed rational functions, finite over $\A^1$ because the numerators are
monic of fixed degree, with the resultant keeping track of the framing.
\cref{thm:marks} follows, since $\PT$ and $s\circ d$ agree on
generators and $d$ is onto.

\mainthm*

\begin{proof}[Proof of \cref{thm:main}]
By \cref{thm:position,thm:marks},
$\pfr(k)=E=\operatorname{im}(s)$ and $w\circ s=\mathrm{id}$; a surjection
with a left inverse is bijective, so $w=s^{-1}$ is a ring isomorphism
$\pfr(k)\cong\GW(k)$, and \cref{inp:rec} transports it to
$\pi_0(\sph_k)(k)$. Then $\ker\PT=\ker(w\circ\PT)=\ker d$.
\end{proof}

\section{Principal quadratic divisors}\label{sec:reciprocity}

This section identifies $\ker(d)$. It is self-contained quadratic form
theory over a perfect field $k$ of characteristic $\ne2$, not necessarily
infinite, and uses from the rest of the paper only Witt's presentation and,
for interpretation, \cref{prop:wpt}. The relations are a
quadratic refinement of the fact that a principal divisor on $\PP^1$ has
degree zero.

\subsection{Reciprocity on \texorpdfstring{$\PP^1$}{P1}}
For a closed point $p$ of $\A^1_k$ let $\pi_p$ be its monic irreducible
polynomial, $\kappa(p)=k[t]/(\pi_p)$, $\theta_p$ the image of $t$, and
\[
\varepsilon_p:=\pi_p'(\theta_p)\in\kappa(p)^\times
\]
the \emph{Euler unit} of $p$; its name comes from \cref{lem:euler}, by
which $x\mapsto\tr_{\kappa(p)/k}(x/\varepsilon_p)$ is the functional dual
to $\theta_p^{\deg\pi_p-1}$. Put
$\kappa(\infty)=k$, $\pi_\infty=1/t$, and for $f\in k(t)^\times$ write
$f=\pi_p^{v_p(f)}g_p$ with $g_p$ a unit at $p$ and $w_p(f):=g_p(p)$.

\begin{lemma}[Euler's formula]\label{lem:euler}
Let $F$ be monic separable of degree $n$ and $E=k[t]/(F)$, in which the
image of the derivative $F'$ is a unit. For $z\in E$, represented by its
unique polynomial of degree $<n$, the trace $\tr_{E/k}\big(z\cdot(F')^{-1}
\big)$ is the coefficient of $t^{n-1}$ in $z$.
\end{lemma}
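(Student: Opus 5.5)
We prove Euler's formula by reducing to the separable splitting over an algebraic closure and applying partial fractions (Lagrange interpolation). Both sides are $k$-linear in $z$ and compatible with base change to $\bar k$. The trace of $E/k$ becomes the trace of $E\otimes_k\bar k\cong\bar k^{\,n}$, and the coefficient of $t^{n-1}$ of the reduced representative is unchanged. So we may assume $F=\prod_{i=1}^n(t-\rho_i)$ with distinct $\rho_i\in\bar k$. Then $E\cong\prod_i\bar k$ via $z\mapsto(z(\rho_i))_i$, and the trace is the sum of the components:
\[
\tr_{E/k}\big(z\cdot (F')^{-1}\big)=\sum_{i=1}^n\frac{z(\rho_i)}{F'(\rho_i)} .
\]

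The key step is to identify this sum with the leading coefficient. Since $\deg z<n$ and the roots are distinct, Lagrange interpolation gives
\[
z(t)=\sum_{i=1}^n z(\rho_i)\,\frac{F(t)}{(t-\rho_i)F'(\rho_i)} .
\]
Here we use $F'(\rho_i)=\prod_{j\ne i}(\rho_i-\rho_j)$. Each $F(t)/(t-\rho_i)$ is monic of degree $n-1$, so comparing coefficients of $t^{n-1}$ gives exactly $\sum_i z(\rho_i)/F'(\rho_i)$. Equivalently, one can argue with the partial fraction expansion $z/F=\sum_i \frac{z(\rho_i)}{F'(\rho_i)}\frac{1}{t-\rho_i}$ and read off the $t^{-1}$ coefficient at infinity.

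No step is a serious obstacle. The only point needing care is the base change: the identification $E\otimes\bar k\cong\bar k^n$ uses separability of $F$, the trace is invariant under extension of scalars, and the reduced representative of $z$ of degree $<n$ is still reduced after base change. Separability is also what makes $F'$ a unit in $E$, since $F'(\rho_i)\neq0$ for all $i$.
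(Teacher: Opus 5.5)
Your proof is correct and is the paper's argument: Lagrange-interpolate $z$ at the roots of $F$ and compare the $t^{n-1}$-coefficient with $\sum_\rho z(\rho)/F'(\rho)=\tr_{E/k}\big(z\cdot(F')^{-1}\big)$. The paper leaves the base change to $\bar k$ and the description of the trace as a sum over the roots implicit, and you make both explicit.
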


\begin{proof}
Lagrange interpolation: $z=\sum_\rho z(\rho)\prod_{\rho'\ne\rho}(t-\rho')
/F'(\rho)$ over the roots, and the $t^{n-1}$-coefficient of the right side
is $\sum_\rho z(\rho)/F'(\rho)=\tr_{E/k}\big(z\cdot(F')^{-1}\big)$.
\end{proof}

\begin{proposition}[Reciprocity on $\PP^1$]\label{prop:recipP1}
For $F$ monic separable of degree $n\ge1$ and $\lambda\in k^\times$,
\[
\sum_{p\in V(F)}\tr_{\kappa(p)/k}\form{\varepsilon_p\,w_p(\lambda F)}
\ =\ \lfloor n/2\rfloor\,h+[n\ \mathrm{odd}]\,\form\lambda\qquad\text{in }\GW(k).
\]
\end{proposition}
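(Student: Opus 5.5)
The plan is to turn the left side into a single scaled trace form on $E=k[t]/(F)$, and then read off that form's Gram matrix in the power basis using \cref{lem:euler}.

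\emph{Step 1: rewrite the summand.} Fix $p\in V(F)$. Since $F$ is separable, $v_p(F)=1$, so $\lambda F=\pi_p\cdot g_p$ with $g_p=\lambda F/\pi_p$. Differentiating $F=\pi_p\cdot(F/\pi_p)$ and evaluating at $\theta_p$ gives $F'(\theta_p)=\pi_p'(\theta_p)\,(F/\pi_p)(\theta_p)$. Hence
\[
\varepsilon_p\,w_p(\lambda F)=\lambda F'(\theta_p).
\]
By the Chinese remainder theorem, $E=\prod_p\kappa(p)$. So the left side equals $\tr_{E/k}\form{\lambda F'}=\lambda\cdot\tr_{E/k}\form{F'}$, where the scalar $\lambda$ multiplies the form. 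Since $F'$ is a unit of $E$ and $F'=(F')^2\cdot(F')^{-1}$, we have $\tr_{E/k}\form{F'}\cong\tr_{E/k}\form{(F')^{-1}}$. It therefore suffices to show
\[
q:=\tr_{E/k}\form{(F')^{-1}}\cong\lfloor n/2\rfloor h+[n\text{ odd}]\form1,
\]
and then multiply by $\form\lambda$, using $\form\lambda h=h$.

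\emph{Step 2: the Gram matrix.} Use the basis $1,t,\dots,t^{n-1}$ of $E$. By \cref{lem:euler}, the $(i,j)$ entry of the Gram matrix of $q$ is the coefficient of $t^{n-1}$ in $t^{i+j}$ reduced modulo $F$. For $i+j<n-1$ no reduction occurs, so the entry is $0$. For $i+j=n-1$ the entry is $1$. The entries with $i+j\ge n$ are unknown, but they are never needed. Thus the Gram matrix is a Hankel matrix that vanishes above the antidiagonal and has $1$'s on it. In particular it is nondegenerate, with determinant $(-1)^{n(n-1)/2}$.

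\emph{Step 3: split off hyperbolic planes.} Let $m=\lfloor n/2\rfloor$. The span of $1,\dots,t^{m-1}$ is totally isotropic of dimension $m$, because $i+j\le 2m-2<n-1$. A nondegenerate form with an $m$-dimensional totally isotropic subspace is $mh\perp q_0$ with $\rk q_0=n-2m$.

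If $n$ is even, $q\cong mh$ and we are done.

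If $n=2m+1$, then $q_0=\form c$ is determined by the determinant: $c\equiv(-1)^{n(n-1)/2}(-1)^{m}$. Since $n(n-1)/2=m(2m+1)\equiv m \pmod 2$, we get $c=1$. Scaling by $\lambda$ then gives the stated formula.

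The only step needing care is Step 1's identification $\varepsilon_p w_p(\lambda F)=\lambda F'(\theta_p)$, which is where the Euler-unit twist in the definition is used. The rest is the standard isotropic-subspace and determinant argument.
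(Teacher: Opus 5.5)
Your proof is correct and is essentially the paper's argument: you identify $\varepsilon_p w_p(\lambda F)=\lambda F'(\theta_p)$, pass to $\tr_{E/k}\form{\lambda/F'}$, and read the anti-triangular Gram matrix off Euler's formula. You then split off $\lfloor n/2\rfloor h$ from the isotropic span of $1,\dots,t^{m-1}$ and fix the remaining entry by the determinant. The only difference is cosmetic: you normalize to $\lambda=1$ and multiply by $\form\lambda$ at the end using $\form\lambda h=h$, whereas the paper keeps $\lambda$ on the antidiagonal throughout.
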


\begin{proof}
With $F=\prod_p\pi_p$ and $E=\prod\kappa(p)$ one has $F'(\theta_p)=
\varepsilon_p\prod_{q\ne p}\pi_q(\theta_p)$ and $w_p(\lambda F)=\lambda
\prod_{q\ne p}\pi_q(\theta_p)$, so $\varepsilon_pw_p(\lambda F)=\lambda
F'(\theta_p)$ and the left side is $\tr_{E/k}\form{\lambda F'}\cong
\tr_{E/k}\form{\lambda/F'}$, the form $(x,y)\mapsto\ell(\lambda xy)$ with
$\ell$ the $t^{n-1}$-coefficient (\cref{lem:euler}). In the basis
$1,t,\dots,t^{n-1}$ its Gram matrix vanishes for $i+j<n-1$ and equals
$\lambda$ for $i+j=n-1$, so the span of $1,\dots,t^{m-1}$,
$m=\lfloor n/2\rfloor$, is totally isotropic and the form is
$mh\perp q_0$ with $\rk q_0\le1$. For $n$ odd, $q_0=\form\delta$ with
$(-1)^m\delta\equiv\det=(-1)^{n(n-1)/2}\lambda^n$, and
$n(n-1)/2-m=(n-1)^2/2$ is even, so $\delta\equiv\lambda$.
\end{proof}

\begin{remark}\label{rem:scharlau}
This is the case of $\PP^1$ of Scharlau's reciprocity law
\cite[Thm.\ 4.1]{Scharlau72}, \cite[Ch.\ 6]{Scharlau85}, itself the case of $\PP^1$ of
the residue theorem of Geyer--Harder--Knebusch--Scharlau \cite{GHKS}:
for $q\in\W(k(t))$, $\sum_{p\in\A^1}s_p(\partial_pq)=\partial_\infty q$ in
$\W(k)$, where $\partial_p$ is the second residue and $s_p=\tr_{\kappa(p)
/k}(\form{\varepsilon_p}\otimes-)$ is the transfer along $x\mapsto\tr(x/
\varepsilon_p)$, the functional dual to $\theta_p^{\deg\pi_p-1}$ (Scharlau
defines $s_p$ by exactly this functional and takes $s_\infty=-\mathrm{id}$,
which accounts for the sign). The proposition lifts it to $\GW(k)$, and through
\cref{prop:wpt} its left side is the Scheja--Storch form of the
framed point $(V(F),\A^1,\lambda F)$: the degree of a framed point on the line is
determined by its behavior at infinity.
\end{remark}

\subsection{The subgroup \texorpdfstring{$R$}{R}}

\begin{definition}\label{def:R}
For $f\in k(t)^\times$ the \emph{quadratic divisor} is
\[
\rho(f):=\sum_{\substack{p\in(\A^1_k)^{(1)}\\ v_p(f)\ \mathrm{odd}}}
\big(\kappa(p),\varepsilon_pw_p(f)\big)-[v_\infty(f)\ \mathrm{odd}]
\big(k,w_\infty(f)\big)\in\Au(\Gamma_k).
\]
Its rank is even, since reducing $\sum_{\PP^1}v_p(f)[\kappa(p):k]=0$ modulo
$2$ gives it up to the term $2[v_\infty(f)\ \mathrm{odd}]$; write
$\rk\rho(f)=2r(f)$ and set $R(f):=\rho(f)-r(f)h_{\!A}$, with
$h_{\!A}=(k,1)+(k,-1)$, the \emph{principal quadratic divisor} of $f$. Let
$R\subseteq\Au(\Gamma_k)$ be the subgroup they generate.
\end{definition}

Every finite point of odd valuation enters with a plus, zeros and poles
alike, and only $\infty$ with a minus: the second residue sees the parity
of the valuation, not its sign. For $f=A/B$ as in \eqref{eq:cF}, the decoration at a zero $\theta$ of
$A$ is $\varepsilon_\theta w_\theta(f)=A'(\theta)/B(\theta)$, the framing
determinant of $c(F)$ there, and at a simple zero $\theta$ of $B$ it is
$\varepsilon_\theta w_\theta(f)\equiv B'(\theta)/A(\theta)$ modulo
squares, the framing determinant of the reciprocal function; so, up to
squares, $\rho(f)$ is the framed point $c(F)$ together with the framed
point of $1/F$ at the odd-order poles, minus the contribution at
infinity.

\begin{proposition}\label{prop:RinKer}
The subgroup $R$ lies in $\ker d$. More precisely, if $f=\lambda Fg^2$
with $\lambda\in k^\times$ and $F$ monic squarefree of degree $n$, then
$\rho(\lambda F)=\sum_{p\in V(F)}(\kappa(p),\lambda F'(\theta_p))-
[n\ \mathrm{odd}](k,\lambda)$, $r(\lambda F)=\lfloor n/2\rfloor$, and
$d(\rho(f))=d(\rho(\lambda F))$, $r(f)=r(\lambda F)$.
\end{proposition}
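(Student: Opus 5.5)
The plan is to reduce everything to the squarefree polynomial $\lambda F$ by showing that $\rho$ changes only by squares of units under $f\mapsto f/g^2$. Then I compute $\rho(\lambda F)$ explicitly and apply \cref{prop:recipP1}.

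\emph{Reduction.} Factor $f=\lambda Fg^2$ using unique factorization in $k[t]$, with $\lambda\in k^\times$, $F$ monic squarefree of degree $n$, and $g\in k(t)^\times$. Separability of $F$ follows from squarefreeness because $k$ is perfect.
\begin{itemize}
\item At a finite closed point $p$, $v_p(f)=v_p(F)+2v_p(g)$, so $v_p(f)$ is odd exactly when $p\in V(F)$. For such $p$, write $g=\pi_p^{v_p(g)}g_p$; then $w_p(f)=w_p(\lambda F)\,w_p(g)^2$ with $w_p(g)\in\kappa(p)^\times$.
\item At infinity, $v_\infty(f)=-n+2v_\infty(g)\equiv n\pmod 2$, and likewise $w_\infty(f)=w_\infty(\lambda F)\,w_\infty(g)^2$.
\end{itemize}
Hence $\rho(f)$ and $\rho(\lambda F)$ have the same support and the same signs, and their decorations differ termwise by squares of units of $\kappa(p)$. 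Two consequences follow. First, the ranks agree, so $r(f)=r(\lambda F)$. Second, since $\tr_{\kappa(p)/k}\form{us^2}=\tr_{\kappa(p)/k}\form u$ (the form $\form{us^2}\cong\form u$ over $\kappa(p)$ via $x\mapsto sx$), we get $d(\rho(f))=d(\rho(\lambda F))$. Note that $\rho(f)\ne\rho(\lambda F)$ in $\Au(\Gamma_k)$ in general, which is why only the images under $d$ are compared.

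\emph{Computing $\rho(\lambda F)$.} The function $\lambda F$ has no finite poles, its odd-valuation finite points are exactly $V(F)$, and $\lambda F=\lambda\,(1/t)^{-n}(1+O(1/t))$ gives $v_\infty(\lambda F)=-n$ and $w_\infty(\lambda F)=\lambda$. The identity $\varepsilon_p w_p(\lambda F)=\lambda F'(\theta_p)$ was already established in the proof of \cref{prop:recipP1}. Together these give the displayed formula for $\rho(\lambda F)$. Its rank is $n-[n\ \mathrm{odd}]=2\lfloor n/2\rfloor$, so $r(\lambda F)=\lfloor n/2\rfloor$.

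\emph{Conclusion.} If $n\ge1$, \cref{prop:recipP1} gives
\[
d(\rho(\lambda F))=\lfloor n/2\rfloor h+[n\ \mathrm{odd}]\form\lambda-[n\ \mathrm{odd}]\form\lambda=\lfloor n/2\rfloor h .
\]
If $n=0$, both $d(\rho(\lambda F))$ and $r(\lambda F)$ vanish trivially. Since $d(h_{\!A})=\form1+\form{-1}=h$, it follows that
\[
d(R(f))=d(\rho(f))-r(f)\,h=\lfloor n/2\rfloor h-\lfloor n/2\rfloor h=0 .
\]
As $d$ is additive, $R\subseteq\ker d$.

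The only delicate point is the bookkeeping at $\infty$: one must check that the normalization $\pi_\infty=1/t$ gives $w_\infty(\lambda F)=\lambda$ with no sign, and that it gives $w_\infty(f)=w_\infty(\lambda F)\,w_\infty(g)^2$. Both follow by expanding in $1/t$. Everything else is formal once \cref{prop:recipP1} is available.
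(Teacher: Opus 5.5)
Your proposal is correct and follows the paper's proof. Both arguments show that $f$ and $\lambda F$ have the same points of odd valuation, with decorations differing by squares (including at $\infty$, where $w_\infty(\lambda F)=\lambda$). Both then use $\varepsilon_pw_p(\lambda F)=\lambda F'(\theta_p)$ and \cref{prop:recipP1} to get $d(\rho(\lambda F))=\lfloor n/2\rfloor h$. You additionally spell out the $n=0$ case and the final cancellation $d(R(f))=0$, which the paper leaves implicit.
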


\begin{proof}
Factor $f=\lambda\prod\pi_p^{v_p}$, let $F$ be the product of the
$\pi_p$ with $v_p$ odd (separable as $k$ is perfect) and $g$ the obvious
cofactor. The points of odd valuation of $f$ and of $\lambda F$ coincide,
and $w_p(f)=w_p(\lambda F)w_p(g)^2$ there, including at $\infty$ where
$w_\infty(\lambda F)=\lambda$. So the decorations agree up to squares, and
$d(\rho(\lambda F))=\lfloor n/2\rfloor h$ by \cref{prop:recipP1} and
$\varepsilon_pw_p(\lambda F)=\lambda F'(\theta_p)$.
\end{proof}

\begin{example}\label{ex:sqrt2}
Over $\Q$, $f=(t+2)/(t^2-2)$ has $\rho(f)=\big(\Q(\sqrt2),4(1+\sqrt2)\big)
+(\Q,\tfrac12)-(\Q,1)$ (the pole $\Q(\sqrt2)$ with $\varepsilon=2\sqrt2$,
$w=2+\sqrt2$; the zero $-2$ with $w=1/2$; a simple zero at $\infty$ with
$w_\infty=1$) and $R(f)=\rho(f)-h_{\!A}$. Since $\tr_{\Q(\sqrt2)/\Q}
\form{1+\sqrt2}\cong\form{2,-1}$, $d(R(f))=\form{2,-1}+\form2-2\form1-
\form{-1}=2\form2-2\form1=0$, because $2x^2+2y^2=(x+y)^2+(x-y)^2$;
note that $\form{2,-1}\ne h$ in $\GW(\Q)$, since $2$ is not a square, so
the relation needs the rational zero to close. The pencil $\varphi_s=(t+2)(t^2+st+2s-2)$
on $\A^1\smallsetminus\{-2\}$, whose quadratic factor equals $2$ at $t=-2$,
is an $\A^1$-homotopy from the point $\Q(\sqrt2)$ with determinant
$4(1+\sqrt2)$ (at $s=0$) to the rational points $0,-1$ with determinants
$2,-1$ (at $s=1$): the splitting pencil of \cref{prop:etalesplit}
in the smallest case, with residue fields jumping while the trace form
$\form{2,-1}$ stays constant (\cref{fig:pencil}).
\end{example}

\begin{figure}[t]
\centering
\begin{tikzpicture}[every node/.style={font=\scriptsize}]
\begin{axis}[width=8.6cm,height=5.6cm,axis lines=middle,
  xlabel={$s$},ylabel={$t$},xlabel style={anchor=west},ylabel style={anchor=south},
  xmin=-1.6,xmax=2.0,ymin=-2.6,ymax=2.4,xtick={-1,1},ytick={-2,-1,1,2},
  clip=false]
\addplot[dark-red,thick,domain=-1.61:2.4,samples=300,variable=\u]
  ({(2-\u*\u)/(\u+2)},{\u});
\addplot[gray,dashed,domain=-1.6:2.0] {-2};
\node[gray,anchor=north east] at (axis cs:1.95,-2.08) {$t=-2$, the pole of $f$, removed};
\addplot[only marks,mark=*,mark size=1.6pt,dark-red] coordinates {(0,1.4142)(0,-1.4142)};
\addplot[only marks,mark=o,mark size=1.8pt,dark-red,thick] coordinates {(1,0)(1,-1)};
\node[anchor=north east,align=right] at (axis cs:-0.22,1.25) {$s=0$: the point $\Q(\sqrt2)$,\\ determinant $4(1+\sqrt2)$};
\node[anchor=south west] at (axis cs:1.05,0.1) {$s=1$: $t=0$, det $2$};
\node[anchor=north west] at (axis cs:1.05,-1.05) {$t=-1$, det $-1$};
\draw[gray!60,dotted] (axis cs:0,-2.6) -- (axis cs:0,2.2);
\draw[gray!60,dotted] (axis cs:1,-2.6) -- (axis cs:1,2.2);
\end{axis}
\end{tikzpicture}
\caption{The splitting pencil of \cref{ex:sqrt2}: the zeros of
$t^2+st+2s-2$ as $s$ varies (drawn over $\R$). The quadratic point
$\Q(\sqrt2)$ at $s=0$ degenerates to the rational points $0,-1$ at $s=1$;
the support stays finite over the $s$-line because the polynomial is monic
in $t$, and the pole $t=-2$ of $f$ is never met.}
\label{fig:pencil}
\end{figure}

\subsection{\texorpdfstring{$R$}{R} reduces everything to rational points and contains Witt's relations}
Write $\Z[k^\times]\subseteq\Au(\Gamma_k)$ for the span of the $(k,v)$.

\begin{proposition}\label{prop:generate}
The subgroups $\Z[k^\times]$ and $R$ together generate $\Au(\Gamma_k)$.
\end{proposition}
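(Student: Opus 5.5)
We must show every generator $(L,u)$, with $L/k$ finite separable and $u\in L^\times$, lies in $\Z[k^\times]+R$. The plan is to produce, for each such pair, a rational function $f$ whose quadratic divisor contains $(L,u)$ as its only non-rational point. Then $(L,u)=R(f)+(\text{rational terms})+r(f)h_{\!A}$, and both the rational terms and $h_{\!A}$ lie in $\Z[k^\times]$.

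Fix $(L,u)$ with $[L:k]=n$. Since $k$ is perfect, $L=k(\theta)$ for some $\theta$; let $\pi$ be the minimal polynomial of $\theta$ and $p$ the corresponding closed point of $\A^1$, so that $\kappa(p)=L$ and $\varepsilon_p=\pi'(\theta)$. We want $f$ with:
\begin{itemize}
\item $v_p(f)=1$;
\item $\varepsilon_p\,w_p(f)=u$ exactly in $L^\times$, since no relation among units is imposed in $\Au(\Gamma_k)$;
\item every other point of odd valuation, including $\infty$, rational over $k$.
\end{itemize}

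\emph{Construction.} Take $f=\pi\cdot B$ with $B\in k[t]$, so that $w_p(f)=B(\theta)$. The residue $u/\pi'(\theta)\in L$ is represented by a unique nonzero polynomial $B_0$ of degree $<n$, and any $B\equiv B_0\bmod \pi$ has the correct decoration at $p$. The task is to choose $B$ in this residue class whose other odd-valuation points are rational. The simplest choice is $B=B_0\,C^2$, but this does not make the zeros of $B_0$ rational.

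\emph{Main obstacle: factoring $B$ into rational zeros.} The freedom is $B=B_0+\pi Q$ with $Q$ arbitrary; we need such a $B$ splitting into linear factors over $k$ times a square times a constant. I would try this first: interpolate with a product of linear factors. Choose distinct $a_1,\dots,a_m\in k$ with $m\ge n$ and $\pi(a_i)\ne0$, set $P=\prod(t-a_i)$, and ask that $cP\equiv B_0\bmod\pi$. This requires $P(\theta)$ to lie in the class $B_0(\theta)\,k^\times$, which is not generally achievable by choice of roots alone, so the plan needs a fallback.

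\emph{Fallback by induction on $n$.} Use rational functions $f=\pi\,B/C$ with $B,C$ of degree $<n$. Then all other odd-valuation points have degree $<n$, and those are absorbed by induction on $[L:k]$, with $n=1$ trivial. Concretely, write $u/\pi'(\theta)=B_0(\theta)$ with $\deg B_0<n$ and take $f=\pi B_0$. Its odd-valuation points other than $p$ are the factors of $B_0$ of odd multiplicity, each of degree $<n$, together possibly with $\infty$, which is rational. By induction each of these points lies in $\Z[k^\times]+R$, whatever its decoration. Hence
\[
(L,u)=R(f)+r(f)h_{\!A}-\sum(\text{lower-degree terms})\in \Z[k^\times]+R.
\]
This also disposes of the exactness concern: $w_p(f)=B_0(\theta)$ exactly, because $f=\pi\cdot B_0$ with $B_0$ a unit at $p$, $\pi$ and $B_0$ being coprime.

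The only remaining check is that $B_0\ne0$ and that $\infty$ contributes a rational term. The first holds because $u\neq 0$; the second holds because $\kappa(\infty)=k$.
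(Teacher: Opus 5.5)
Your fallback argument is correct and is essentially the paper's proof. It proceeds by induction on $[L:k]$, using a rational function with odd (simple) valuation at the point $p$ of $\operatorname{Spec}L$ and a cofactor $B_0$ of degree $<n$ chosen so that $\varepsilon_p w_p(f)=u$ exactly; all other odd-valuation points then have degree $<n$ or are $\infty$. The only difference is that the paper takes $f=B_0/\pi$, with a pole at $p$, where you take $f=\pi B_0$, with a zero. This is immaterial, since $\rho$ sees only the parity of the valuation. The exploratory interpolation attempt can simply be deleted.
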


\begin{proof}
Induct on $d=[L:k]$ for a generator $(L,u)$. With $A$ the minimal
polynomial of a primitive element $\theta$ and $B$ the unique polynomial
of degree $<d$ with $B(\theta)=u/A'(\theta)$ in $L$, put $f=B/A$. At $p=(A)$,
$v_p(f)=-1$, $w_p=B(\theta)$ and $\varepsilon_p=A'(\theta)$, so the term is
$(L,u)$; the other points of odd valuation are the odd-order roots of $B$,
of degree $\le\deg B<d$, and possibly $\infty$, where the term is
$-(k,\mathrm{lc}(B))$ with $\mathrm{lc}(B)$ the leading coefficient of
$B$. So $(L,u)=R(f)+r(f)h_{\!A}-\Sigma$ with $\Sigma$
supported in degrees $<d$.
\end{proof}

\begin{proposition}\label{prop:MHinR}
For $u,v,s\in k^\times$ the following lie in $R$: \textup{(a)}
$(k,us^2)-(k,u)$; \textup{(b)} $(k,u)+(k,v)-(k,u+v)-(k,uv(u+v))$ for
$u+v\ne0$; \textup{(c)} $(k,u)+(k,-u)-(k,1)-(k,-1)$.
\end{proposition}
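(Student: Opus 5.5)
The plan is to write each of the three elements, modulo elements already known to lie in $R$, as an explicit $R(f)$ for a low-degree rational function $f$ with rational zeros and poles, read off from \cref{def:R}. Two structural points drive the computation. First, $\Au(\Gamma_k)$ imposes no relation among units, so $(k,u)$ and $(k,us^2)$ are genuinely different generators; part (a) therefore has real content and must be proved first. Second, the decoration at a finite point of odd valuation is $\varepsilon_pw_p(f)$, and $\varepsilon_p=1$ for a rational point $p=a$. The decoration at $\infty$ is $w_\infty(f)$, the leading coefficient, entering with a minus sign. For a simple zero or pole at $a$, $w_a(f)$ is just the value of the cofactor at $a$.

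For (c), take $f=\lambda(t-a)(t-b)$ with $\delta=a-b$. The points of odd valuation are $a$ and $b$, with decorations $\lambda\delta$ and $-\lambda\delta$; $v_\infty(f)=-2$ is even, and $r(f)=1$. Hence $R(f)=(k,\lambda\delta)+(k,-\lambda\delta)-h_{\!A}$. Setting $\lambda\delta=u$ gives (c) exactly.

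For (a), take instead $f=\lambda(t-a)/(t-b)$. Now $v_\infty(f)=0$, the decoration at $a$ is $\lambda/\delta$, and the decoration at the pole $b$ is $\lambda(b-a)=-\lambda\delta$. So $R(f)=(k,\lambda/\delta)+(k,-\lambda\delta)-h_{\!A}$. Subtracting the element of (c) with $\lambda\delta=u$ leaves $(k,\lambda/\delta)-(k,\lambda\delta)$. Choosing $\delta=s^{-1}$ and $\lambda=us^{-1}$ makes this $(k,us^2)-(k,u)\in R$. From then on, decorations in $\Z[k^\times]$ may be changed by squares modulo $R$.

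For (b), use a degree-$3$-type function $f=\lambda(t-a)(t-b)/t$ with $c=0$, $b=xa$ and $x\ne0,1$. The points of odd valuation are $a$, $b$, $0$ and $\infty$, with $v_\infty(f)=-1$, and $r(f)=1$. The decorations are as follows:
\begin{itemize}
\item at $a$: $\lambda(a-b)/a=\lambda(1-x)$;
\item at $b$: $\lambda(x-1)/x$;
\item at $0$: $\lambda ab\equiv\lambda x$ modulo squares;
\item at $\infty$: $\lambda$.
\end{itemize}
Solving $\lambda(1-x)=u$ and $\lambda(x-1)/x=v$ gives $x=-u/v$ and $\lambda=uv/(u+v)$. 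This is legitimate since $u+v\ne0$, which gives $x\ne1$, and $x\ne0$ because $u\ne0$. Then $\lambda x\equiv-(u+v)$ and $\lambda\equiv uv(u+v)$ modulo squares. Using (a) to adjust squares, $R(f)\equiv(k,u)+(k,v)+(k,-(u+v))-(k,uv(u+v))-h_{\!A}$ modulo $R$. Subtracting the instance of (c) with $u+v$ in place of $u$ yields (b).

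The main obstacle is bookkeeping rather than ideas. One must get the signs and normalizations of $w_p$ at poles and at $\infty$ right, including the minus sign on the $\infty$ term, the value $r(f)$, and the fact that $\varepsilon_p=1$ at rational points. One must also remember that square classes may only be identified after (a) is established. For that reason I would prove (c) first, then (a), then (b), and check each decoration against \cref{prop:RinKer} as a sanity test, since applying $d$ must give $0$ in every case.
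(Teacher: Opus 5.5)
Your proof is correct and is essentially the paper's argument: it proves (c), then (a), then (b) in the same order, and uses the same functions for (c) and (a), up to the harmless scalar $\lambda$ in (c). For (b) you use a different degree-two function with four rational odd points, $\lambda(t-a)(t-b)/t$ instead of the paper's $m(t-q)/\big(t(t-1)\big)$, and then reduce with (a) and (c) in the same way. There is one arithmetic slip in (a): the choice $\delta=s^{-1}$, $\lambda=us^{-1}$ gives $(k,u)-(k,us^{-2})$ rather than $(k,us^2)-(k,u)$, so take $\lambda=us$ instead (or negate and replace $s$ by $s^{-1}$, which is harmless since $R$ is a subgroup).
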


\begin{proof}
In each case we write down a rational function $f$ with rational zeros
and poles, read off $\rho(f)$ from \cref{def:R} --- at a rational point
$a$ the Euler unit is $\varepsilon_a=1$ and $w_a(f)$ is the value at $a$
of $f/(t-a)^{v_a(f)}$ --- and then simplify $R(f)$ using the relations
already established. We prove (c), then (a) using (c), then (b) using
both.

(c) Let $f=(t-a)(t-b)$ with $a\ne b$. The points of odd valuation are the
two simple zeros, with $w_a=a-b$ and $w_b=b-a$; at $\infty$ the valuation
$-2$ is even. So $\rho(f)=(k,\lambda)+(k,-\lambda)$ with $\lambda=a-b$,
of rank $2$, and $R(f)=(k,\lambda)+(k,-\lambda)-h_{\!A}$. Since $a-b$
takes every value in $k^\times$, this is (c) with $u=\lambda$.

(a) Let $f=\lambda(t-a)/(t-b)$ with $a\ne b$ and $\lambda\in k^\times$.
The odd points are the zero $a$, with $w_a=\lambda/(a-b)$, and the pole
$b$, with $w_b=\lambda(b-a)$; the valuation at $\infty$ is $0$. Writing
$w=\lambda/(a-b)$ we have $\lambda(b-a)=-\lambda^2/w$, so
$R(f)=(k,w)+(k,-\lambda^2/w)-h_{\!A}$. Subtracting the instance
$(k,w)+(k,-w)-h_{\!A}$ of (c) leaves
\[
(k,-\lambda^2/w)-(k,-w)\ =\ \big(k,(-w)(\lambda/w)^2\big)-(k,-w)\ \in\ R .
\]
Given $u,s\in k^\times$, choose $a-b=s$ and $\lambda=-us$; then $w=-u$
and $\lambda/w=s$, and the displayed element is $(k,us^2)-(k,u)$.

(b) Let $m=u+v\ne0$ and $q=v/m$; then $q\ne0,1$ because $u,v\ne0$. Put
$f=m(t-q)/\big(t(t-1)\big)$. The odd points are the zero $q$, the poles
$0$ and $1$, and $\infty$, where $f$ has a simple pole; the values are
\[
w_0=mq=v,\qquad w_1=m(1-q)=u,\qquad w_q=\frac{m}{q(q-1)}=-\frac{m^3}{uv},
\qquad w_\infty=m .
\]
Hence $\rho(f)=(k,u)+(k,v)+(k,-m^3/uv)-(k,m)$, of rank $2$, and
$R(f)=\rho(f)-h_{\!A}$. Now $-m^3/uv=(-muv)\cdot(m/uv)^2$, so by (a)
$(k,-m^3/uv)\equiv(k,-muv)$ modulo $R$, and by (c) applied to $muv$,
$h_{\!A}\equiv(k,muv)+(k,-muv)$ modulo $R$. Substituting both into
$R(f)\equiv0$ and canceling $(k,-muv)$ gives
$(k,u)+(k,v)-(k,m)-(k,muv)\in R$, which is (b).
\end{proof}

\begin{proof}[Proof of \cref{thm:reciprocity}]
The inclusion $R\subseteq\ker d$ is \cref{prop:RinKer}. If $x\in\ker d$, write
$x=y+z$ with $y\in\Z[k^\times]$, $z\in R$ (\cref{prop:generate});
then $y\in\ker d\cap\Z[k^\times]$, which is generated by Witt's relations,
and these lie in $R$ by \cref{prop:MHinR}. So $\ker d=R$, an
ideal; the second statement is \cref{thm:main}.
\end{proof}

No counting hypothesis enters: \cref{prop:MHinR} uses only
rational points with prescribed differences, which any field supplies, and
perfectness is used only so that the $\kappa(p)$ are separable, keeping the
classes $(\kappa(p),-)$ inside $\Au(\Gamma_k)$. The content of the theorem
is that the divisorial relations $R(f)$ generate everything, including the
transfer relations of \cref{prop:generate}, which are the
arithmetic shadow of the splitting pencils of \cref{sec:marks}.

\section{Complements}\label{sec:complements}

\subsection{Why there is a kernel}\label{sec:tomdieck}
No framed point dies on its own: along an $\A^1$-family of framed rational
points the Jacobian determinant is a unit of $k[t]$, hence constant, and
every relation in $\ker d$ --- already $\form u=\form{us^2}$, in
\cref{prop:relations}(a) --- involves several points at once. A kernel of
this kind is present in Segal's setting too, invisible only because his
source records the canonical framing alone. Let $C_2$ act on $\R$ by the
sign and let $0\in\R^{\mathrm{sgn}}$ carry the opposite framing
$x\mapsto-x$: the resulting element of $\pi_0^{C_2}(\sph)$ has underlying
degree $-1$ and fixed-point degree $+1$, marks $(-1,1)=(1,1)-(2,0)$, so it
equals $[\mathrm{pt}]-[C_2/e]$. A non-canonically framed point is a
combination of canonically framed orbits of different types, and the
family $x^3-tx$ realizes the relation by orbits changing type
(\cref{fig:pitchfork}). Had Segal's source for $C_2$ been decorated by
signs, his map would have been onto with this kernel.

\begin{figure}[t]
\centering
\begin{tikzpicture}[scale=1.05, every node/.style={font=\scriptsize}]
\begin{axis}[width=7.2cm,height=5cm,axis lines=middle,xlabel={$t$},ylabel={$x$},
  xlabel style={anchor=west},ylabel style={anchor=south},
  xmin=-1.6,xmax=1.9,ymin=-1.85,ymax=1.8,xtick=\empty,ytick=\empty,clip=false]
\addplot[dark-red,thick,domain=-1.5:1.7,samples=2] {0};
\addplot[dark-red,thick,domain=0:1.7,samples=120] {sqrt(x)};
\addplot[dark-red,thick,domain=0:1.7,samples=120] {-sqrt(x)};
\addplot[dark-red!60,dashed,domain=-1.5:0,samples=120] {sqrt(-x)};
\addplot[dark-red!60,dashed,domain=-1.5:0,samples=120] {-sqrt(-x)};
\node[anchor=north east,dark-red!70!black] at (axis cs:-0.42,-0.08) {$0$, framing $-t>0$};
\node[anchor=north,dark-red!70!black] at (axis cs:0.95,-0.08) {$0$, framing $-t<0$};
\node[anchor=south,dark-red!70!black] at (axis cs:0.9,1.45) {$\pm\sqrt t$, framing $2t>0$};
\node[anchor=south east,dark-red!60] at (axis cs:-0.12,-1.82) {$\pm i\sqrt{-t}$: a conjugate pair};
\fill[dark-red] (axis cs:0,0) circle (1.6pt);
\end{axis}
\end{tikzpicture}
\caption{The family $x^3-tx$. For $t<0$ the zeros are the rational point
$0$ with positive framing and a conjugate pair; for $t>0$ the pair has
landed on the real line and split into two positively framed points, and
the point it landed on has reversed its framing. Read in
$\pi_0^{C_2}(\sph)$ this is $(\mathrm{pt},\text{opposite framing})=
[\mathrm{pt}]-[C_2/e]$; read in $\Au(\Gamma_\R)$, in the notation of
\cref{sec:real}, it is $(\C,z)+(\R,+)=(\R,-)+2(\R,+)$.}
\label{fig:pitchfork}
\end{figure}

\subsection{The real case}\label{sec:real}
Over $\R$, write $(\R,\pm)$ for $(\R,u)$ with $\operatorname{sgn}u=\pm1$:
modulo $\ker d$ only the sign of $u$ survives (\cref{prop:relations}(a))
and all the classes $(\C,z)$ coincide (\cref{thm:reciprocity}), while
$\GW(\R)$ is detected by rank and signature. Equivariant Betti realization \cite{HKO11,
HellerOrmsby} sends $(\R,u)$, the map $x\mapsto ux$ of $T$, to $z\mapsto uz$
on $S^{\C}=S^{1+\sigma}$, of underlying degree $1$ and fixed-point degree
$\operatorname{sgn}u$, and $(\C,z)$ to a free orbit; so the composite
$\Au(\Gamma_\R)\to\pi_0^{C_2}(\sph)\cong A(C_2)\to\Z\times\Z$ is
$(\rk,\operatorname{sgn})\circ d$: \emph{the Dress map is the mark
homomorphism read through realization}, and $(\R,-1)\mapsto[C_2/e]-
[\mathrm{pt}]$, the negative of the element of \cref{sec:tomdieck}.
Restricted to the undecorated $A(\Gamma_\R)=\Z\{[\R],[\C]\}$, $\PT$ is an
isomorphism onto $\GW(\R)$, $[\R]\mapsto\form1$, $[\C]\mapsto h$, and
realization identifies it with Segal's. The kernel of $d$ is generated by
$(\R,u)\sim(\R,uc^2)$ and $(\C,z)\sim(\R,+)+(\R,-)$, the latter being
$R(t^2+1)$; the family $x^3-tx$ realizes it as an $\A^1$-homotopy in which
a conjugate pair lands on the real line, splits into two positive points,
and flips the framing of the point it lands on:
$(\C,z)+(\R,+)=(\R,-)+2(\R,+)$. Reciprocity is the degree: for $f\in\R(t)
^\times$ nonconstant, $\rk R(f)=0$ because $\deg\operatorname{div}f=0$,
and $\operatorname{sgn}d(\rho(f))$ is the signed count of preimages of $0$
minus that of $\infty$ under $f\colon\PP^1(\R)\to\PP^1(\R)$ (a real zero
of odd order has local degree $\operatorname{sgn}w_p$, a pole
$-\operatorname{sgn}w_p$ since the chart $1/f$ reverses orientation, and at
$\infty$ the chart $1/t$ reverses it again), which vanishes because the
degree of a map of circles does not depend on the regular value
(\cref{fig:degree}). Over a
general $k$ the signatures at the orderings are the marks at the real
closures; they detect $\W(k)$ modulo torsion \cite[Ch.\ VIII,
\S3]{Lam05}, but the torsion --- already $\form{2,-1}-h\in\W(\Q)$ in
\cref{ex:sqrt2} --- is invisible to every signature, is seen by $w$
through the discriminant, and is matched on the homotopy side by the
resultant. This is the precise sense in which (S5) is harder over a field:
the invariant playing the role of the marks has torsion, and the torsion
is where the arithmetic lives.

\begin{figure}[t]
\centering
\begin{tikzpicture}[every node/.style={font=\scriptsize}]
\begin{axis}[width=9cm,height=5.4cm,axis lines=middle,xlabel={$t$},ylabel={$f$},
  xlabel style={anchor=west},ylabel style={anchor=south},
  xmin=-3.2,xmax=3.2,ymin=-3.6,ymax=3.6,xtick={-1,1},ytick=\empty,clip=true]
\addplot[dark-red,thick,domain=-3.2:-0.28,samples=150] {(x^2-1)/x};
\addplot[dark-red,thick,domain=0.28:3.2,samples=150] {(x^2-1)/x};
\draw[gray!60,dashed] (axis cs:0,-3.6) -- (axis cs:0,3.6);
\fill[dark-red] (axis cs:-1,0) circle (1.7pt) node[anchor=south east,yshift=2pt] {zero, $+$};
\fill[dark-red] (axis cs:1,0) circle (1.7pt);
\node[dark-red,anchor=north west] at (axis cs:1.3,-0.12) {zero, $+$};
\node[dark-red!70!black,anchor=south west] at (axis cs:0.08,2.3) {pole, $+$};
\node[dark-red!70!black,anchor=north east] at (axis cs:3.15,0.95) {pole at $\infty$, $+$};
\end{axis}
\end{tikzpicture}
\caption{Reciprocity over $\R$ is the degree. For $f=(t^2-1)/t$ the signed
preimages of $0$ (the zeros $\pm1$, local degree $+1$ each) and of $\infty$
(the pole at $0$, local degree $-\operatorname{sgn}w_0=+1$, and the pole at
$\infty$, local degree $+\operatorname{sgn}w_\infty=+1$) both count to the
degree $2$ of $f\colon\PP^1(\R)\to\PP^1(\R)$; their difference, the
signature of $d(\rho(f))$, is zero.}
\label{fig:degree}
\end{figure}

\end{document}